\newcommand\R{{\mathbb{R}}}
\newcommand\C{{\mathbb{C}}}
\renewcommand\P{{\mathbf{P}}}
\newcommand\E{{\mathbf{E}}}
\newcommand\Var{\mathbf{Var}}
\renewcommand\Im{{\operatorname{Im}}}
\renewcommand\Re{{\operatorname{Re}}}
\newcommand\eps{{\varepsilon}}
\newcommand\tr{\operatorname{trace}}
\renewcommand\sc{{\operatorname{sc}}}
\newcommand\Arg{{\operatorname{Arg}}}
\newcommand\condo{{{\bf C0}}}
\subjclass{15A52}
\theoremstyle{plain}
  \newtheorem{theorem}{Theorem}
  \newtheorem{proposition}[theorem]{Proposition}
  \newtheorem{corollary}[theorem]{Corollary}
\theoremstyle{definition}
  \newtheorem{definition}[theorem]{Definition}
  \newtheorem{remark}[theorem]{Remark}
\begin{document}

\title[Concentration of eigenvalues]{Random matrices: \\ Sharp concentration  of eigenvalues}

\author{Terence Tao}
\address{Department of Mathematics, UCLA, Los Angeles CA 90095-1555}
\email{tao@math.ucla.edu}
\thanks{T. Tao is supported by a grant from the MacArthur Foundation, and by NSF grant DMS-0649473.}

\author{Van Vu}
\address{Department of Mathematics, Yale University, New Haven, CT }
\email{van.vu@yale.edu}
\thanks{V. Vu is supported by research grants DMS-0901216 and AFOSAR-FA-9550-09-1-0167.}

\begin{abstract}  Let $W_n= \frac{1}{\sqrt n}  M_n$ be a Wigner matrix
whose entries have vanishing third moment, normalized so that the spectrum is concentrated in the interval $[-2,2]$. We prove a  concentration bound for  $N_I = N_I(W_n)$,  the number of eigenvalues of $W_n$ in an interval $I$.

Our result shows that $N_I$  decays exponentially with standard deviation 
at most $O(\log^{O(1)} n)$. This is best possible up to the
constant  exponent in the logarithmic term.  As a corollary, the bulk eigenvalues are localized to an interval of width $O(\log^{O(1)} n/n)$; again, this is optimal up to the exponent. These results strengthen recent results of Erd\H os, Yau and Yin  (under the extra assumption of vanishing third moment). 

Our proof is relatively simple and relies on the Lindeberg replacement argument.
\end{abstract}

\maketitle

\setcounter{tocdepth}{2}

\section{Introduction}

The purpose of this paper is to sharpen the existing bounds on the eigenvalue counting function $N_I = N_I(W_n)$ of a (normalized) Wigner matrix $W_n = \frac{1}{\sqrt{n}} M_n$, and related quantities such as the Stieltjes transform $s_{W_n}(z)$ and individual eigenvalues $\lambda_i(W_n)$.  Let us first state the Wigner random matrix model which we will use.

\begin{definition}[Wigner matrices]\label{def:Wignermatrix}  Let $n \geq 1$ be an integer (which we view as a parameter going off to infinity; in particular, $n$ is understood to be large enough that quantities such as $\log \log n$ are well-defined and positive). An $n \times n$ \emph{Wigner matrix} $M_n$ is defined to be a  random Hermitian $n \times n$ matrix $M_n = (\xi_{ij})_{1 \leq i,j \leq n}$, in which the $\xi_{ij}$ for $1 \leq i \leq j \leq n$ are jointly independent with $\xi_{ji} = \overline{\xi_{ij}}$ (in particular, the $\xi_{ii}$ are real-valued).  For $1 \leq i < j \leq n$, we require that the $\xi_{ij}$ have mean zero and variance one, while for $1 \leq i=j \leq n$ we require that the $\xi_{ij}$ (which are necessarily real) have mean zero and variance $\sigma^2$ for some $\sigma^2>0$ independent of $i,j,n$.   For simplicity, we will also assume that for each $1 \leq i < j \leq n$, the real and imaginary parts $\Re \xi_{ij}$, $\Im \xi_{ij}$ are independent.  We refer to the distributions $\Re \xi_{ij}$, $\Im \xi_{ij}$ for $1\leq i<j \leq n$  and $\xi_{ii}$ for $1 \leq i \leq n$ as the \emph{atom distributions} of $M_n$, and view them as fixed while $n$ goes off to infinity.  

We say that the Wigner matrix ensemble \emph{obeys Condition {\condo}} if we have the exponential decay condition
\begin{equation}\label{exp-decay}
\P(|\xi_{ij}|\ge t^C) \le e^{-t} 
\end{equation}
for all $1 \leq i,j \leq n$ and $t \ge C'$, and some constants $C, C'$ (independent of $i,j,n$).  

Two Wigner matrices $M_n = (\xi_{ij})_{1 \leq i,j \leq n}$ and $M'_n = (\xi'_{ij})_{1 \leq i,j \leq n}$ are said to have \emph{matching moments to order $m$} for some $m \geq 0$ if one has
\begin{equation}\label{match-1}
 \E \Re(\xi_{ij})^k \Im(\xi_{ij})^l = \E \Re(\xi'_{ij})^k \Im(\xi'_{ij})^l 
\end{equation}
for all $1 \leq i, j \leq n$ and all natural numbers $k,l \geq 0$ with $k+l \leq m$.  As we are assuming the real and imaginary parts to be independent, this condition simplifies to the conditions
\begin{equation}\label{match-2}
 \E \Re(\xi_{ij})^k = \E \Re(\xi'_{ij})^k; \quad \E \Im(\xi_{ij})^k = \E \Im(\xi'_{ij})^k
\end{equation}
for all $1 \leq i, j \leq n$ and all $0 \leq k \leq m$.  If we only require \eqref{match-1} or \eqref{match-2} to hold in the off-diagonal case $i \neq j$ (resp. in the diagonal case $i=j$), we say that $M_n$ and $M'_n$ \emph{match moments to order $m$ off the diagonal} (resp. on the diagonal).
\end{definition}

We observe four basic examples of Wigner matrices:
\begin{itemize}
\item In the \emph{Gaussian Unitary Ensemble} (GUE), $\xi_{ij} \equiv N(0,1)_\C$ is the standard complex gaussian random variable for $1 \leq i < j \leq n$, $\xi_{ii} \equiv N(0,1)_\R$ is the standard real gaussian random variable for $1 \leq i \leq n$, and $\sigma^2 = 1$.
\item In the \emph{Gaussian Orthogonal Ensemble} (GOE) $\xi_{ij} \equiv N(0,1)_\R$ is the standard real gaussian random variable for $1 \leq i < j \leq n$, $\xi_{ii} \equiv N(0,2)_\R$ is a slightly rescaled real gaussian random variable for $1 \leq i \leq n$, and $\sigma^2=2$.
\item In the \emph{symmetric Bernoulli ensemble}, $\xi_{ij}$ equals $+1$ with probability $1/2$ and $-1$ with probability $1/2$ for all $1 \leq i, j \leq n$, and $\sigma^2=1$.
\item In the \emph{complex Hermitian Bernoulli ensemble}, $\Re \xi_{ij}, \Im \xi_{ij}$ for $1 \leq i < j \leq n$ and $\xi_{ii}$ for $1 \leq i \leq n$ all equal $+1$ with probability $1/2$ and $-1$ with probability $1/2$, and $\sigma^2=1$.
\end{itemize}

\begin{remark}  Note that we do not require the off-diagonal $\xi_{ij}$, $1 \leq i < j \leq n$ (or the diagonal $\xi_{ii}$, $1 \leq i \leq n$) to be identically distributed.  This lack of an identical distribution hypothesis will be convenient when we apply the Lindeberg exchange strategy \cite{lindeberg}, in which one Wigner matrix is compared to another one by exchanging the entries of the former matrix with the latter one\footnote{More precisely, we exchange the diagonal entries one at a time, and the off-diagonal entries two at a time, in order to preserve the Hermitian property throughout.} at a time.  As such, the intermediate stages of this exchange process need not have identically distributed entries, even if the initial and final matrices do.

The hypothesis of independence of real and imaginary parts is imposed purely to simplify the exposition, and can easily be removed at the cost of some more complicated notation; in particular, the simpler moment matching condition \eqref{match-2} must be replaced by the more complicated condition \eqref{match-1}. See Remark \ref{non-indep}.
\end{remark}

In this paper, we will mostly deal with the (coarse-scale) normalization $W_n := \frac{1}{\sqrt n} M_n$ of $M_n$ of the Wigner matrix, and more specifically with the eigenvalue counting function
$$ N_I = N_I(W_n) := | \{ 1 \leq i \leq n: \lambda_i(W_n) \in I \} |$$
of this matrix for various intervals $I \subset \R$, where $\lambda_1(W_n) \leq \ldots \leq \lambda_n(W_n)$ denote the (necessarily) real eigenvalues of the (Hermitian) matrix $W_n$.

The well-known \emph{Wigner semicircle law} describes the bulk behavior of the counting function $N_I$ of a Wigner matrix in terms of the \emph{semicircular distribution} $\rho_{sc}(x)\ dx$, where $\rho_{sc}: \R \to \R$ is the function
$$ \rho_{sc}(x) := \frac{1}{2\pi} (4-x^2)_+^{1/2}.$$

\begin{theorem}[Semicircular law]\label{theorem:Wigner} Let $M_n$ be a Wigner Hermitian matrix obeying Condition {\condo}.  Then for any fixed interval $I$ (independent of $n$), one has
$$\lim_{n \rightarrow \infty} \frac{1}{n} N_I(W_n)
 = \int_I \rho_{sc}(y)\ dy$$
in the sense of probability.
\end{theorem}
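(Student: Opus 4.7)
The plan is to prove the semicircular law via the Stieltjes transform method. Define the Stieltjes transform of the empirical spectral distribution
$$ s_n(z) := \frac{1}{n} \tr (W_n - z)^{-1} = \frac{1}{n} \sum_{i=1}^n \frac{1}{\lambda_i(W_n) - z} $$
for $z$ in the upper half plane, and the Stieltjes transform of the semicircle law,
$$ s_{sc}(z) := \int_\R \frac{\rho_{sc}(y)}{y-z}\,dy, $$
which is well-known to be the unique solution in the upper half plane of the self-consistent equation $s_{sc}(z)^2 + z s_{sc}(z) + 1 = 0$. The standard Stieltjes continuity theorem shows that it suffices to prove pointwise convergence $s_n(z) \to s_{sc}(z)$ in probability for every fixed $z$ with $\Im z > 0$: this gives vague convergence of the empirical spectral measure $\frac{1}{n}\sum_i \delta_{\lambda_i(W_n)}$ to $\rho_{sc}(y)\,dy$ in probability, and since $\rho_{sc}$ has a continuous distribution function this upgrades to convergence of $\frac{1}{n} N_I$ to $\int_I \rho_{sc}$ in probability for any fixed interval $I$.

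To show $s_n(z) \to s_{sc}(z)$, I would use Schur's complement formula for each diagonal entry of the resolvent:
$$ ((W_n - z)^{-1})_{ii} = \frac{1}{\xi_{ii}/\sqrt n - z - a_i^* (W_n^{(i)} - z)^{-1} a_i}, $$
where $W_n^{(i)}$ is the $(n-1)\times(n-1)$ minor obtained by deleting row and column $i$, and $a_i \in \C^{n-1}$ is the corresponding column of $W_n$ with the diagonal entry removed. The entries of $a_i$ are independent of $W_n^{(i)}$, with mean zero and variance $1/n$. A concentration-of-quadratic-forms estimate (the Hanson--Wright inequality, or a simple variance computation sufficing for convergence in probability) shows that
$$ a_i^* (W_n^{(i)} - z)^{-1} a_i = \frac{1}{n} \tr (W_n^{(i)} - z)^{-1} + o(1) = s_n(z) + o(1) $$
with high probability, using the bound $\|(W_n^{(i)}-z)^{-1}\| \le 1/\Im z$ and the interlacing bound $|\tr(W_n^{(i)}-z)^{-1} - \tr(W_n-z)^{-1}| \le 1/\Im z$. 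Averaging the Schur identity over $i$ and using $\xi_{ii}/\sqrt n = o(1)$ yields the approximate self-consistent equation
$$ s_n(z) = -\frac{1}{z + s_n(z)} + o(1) $$
with high probability. A stability argument for this equation on the upper half plane (using $\Im s_n(z) \ge 0$ to select the correct root) gives $s_n(z) \to s_{sc}(z)$ in probability.

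The main obstacle is the concentration of the quadratic form $a_i^* R a_i$ around its conditional expectation $\frac{1}{n}\tr R$, where $R = (W_n^{(i)}-z)^{-1}$; everything else (Schur complement, stability of the quadratic self-consistent equation, Stieltjes continuity) is essentially linear algebra and complex analysis. Fortunately, this concentration is mild: since the entries of $\sqrt n\, a_i$ are independent with mean zero, unit variance and subexponential tails by Condition {\condo}, a direct computation gives $\Var(a_i^* R a_i) = O(\|R\|_{HS}^2/n^2) = O(1/(n (\Im z)^2))$, which is $o(1)$. Combined with Chebyshev's inequality, this is enough for convergence in probability, which is all Theorem~\ref{theorem:Wigner} requires; the stronger quantitative concentration needed later for the main results of the paper will require more refined tools.
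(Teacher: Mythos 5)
The paper does not give its own proof of this theorem; it simply cites \cite{bai} and moves on, since Theorem~\ref{theorem:Wigner} is classical background. Your sketch is a correct outline of the standard Stieltjes transform proof: reduce to pointwise convergence $s_n(z)\to s_{sc}(z)$ on the upper half plane via the Stieltjes continuity theorem, apply the Schur complement identity to each diagonal resolvent entry, concentrate the quadratic form $a_i^*(W_n^{(i)}-z)^{-1}a_i$ around $\frac1n\tr(W_n^{(i)}-z)^{-1}$, average to obtain the approximate self-consistent equation $s_n(z) = -1/(z+s_n(z)) + o(1)$, and finish by a stability analysis that uses $\Im s_n(z)>0$ to pick out the root $s_{sc}(z)$. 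This is essentially the argument in the cited reference, and it is also the qualitative skeleton of the local semicircle law the paper develops in Appendix~A (Proposition~\ref{hanson}, Proposition~\ref{sc}, Corollary~\ref{lsl}); the paper's appendix is just a quantitative refinement of what you wrote. The one point worth flagging is that your concentration step deliberately uses only a second-moment (variance plus Chebyshev) bound, which gives convergence in probability and is therefore enough for Theorem~\ref{theorem:Wigner} but would not suffice for the local law at scale $\eta\asymp \log^{O(1)}n/n$, which requires the sub-exponential Hanson--Wright bound to push the failure probability down to $O(\exp(-c(n\eta)^c))$; you correctly note this yourself. Also note, as the paper remarks, that Condition~\condo{} is not actually needed for Theorem~\ref{theorem:Wigner} (finite variance suffices for the semicircle law), so invoking sub-exponential tails for the fourth-moment estimate is a convenience rather than a necessity.
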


See for instance \cite{bai} for a proof of this theorem and for historical background.  Condition {\condo} can be omitted from this law, but we retain the hypothesis as it will be needed for the subsequent results discussed below.

If we use $o(x)$ to denote a quantity that goes to zero as $n \to \infty$ after dividing by $x$, we can reformulate Theorem \ref{theorem:Wigner} as the assertion that the asymptotic
\begin{equation}\label{niwn}
 N_I(W_n) = n \int_I \rho_{sc}(y)\ dy + o(n)
\end{equation}
holds with probability $1-o(1)$ for each fixed $I$.

One can also phrase the semicircular law in terms of the individual eigenvalues $\lambda_i(W_n)$.  If for each $1 \leq i \leq n$ we define the \emph{classical location} $\gamma_i$ of the normalised $i^{\operatorname{th}}$ eigenvalue by the formula
\begin{equation}\label{gammai}
\int_{-\infty}^{\gamma_i} \rho_{sc} (x) dx = \frac{i}{n}. 
\end{equation}
then the Wigner semicircular law (combined with an almost sure bound of $(2+o(1))\sqrt{n}$ for the operator norm of $M_n$, due to Bai and Yin \cite{baiyin}) is equivalent to the assertion that one has
\begin{equation}\label{gammai-form}
\lambda_i(W_n) = \gamma_i + o(1)
\end{equation}
for any given $1 \leq i \leq n$, with probability $1-o(1)$.

In this paper we investigate sharper versions of the semicircular law (known in the literature as \emph{local semicircular laws}), which improve upon the error terms and failure probabilities in \eqref{niwn} and \eqref{gammai-form}, and in which the interval $I$ is now allowed to depend on $n$.

We first discuss the case of the Gaussian Unitary Ensemble (GUE), which is the most well-understood case, as the joint distribution of the eigenvalues is given by a determinantal point process.  Because of this, it is known that for any interval $I$, the random variable $N_I(W_n)$ in the GUE case obeys a law of the form
\begin{equation}\label{etain}
 N_I(W_n) \equiv \sum_{i=1}^{\infty} \eta_i
\end{equation}
where the $\eta_i = \eta_{i,n,I}$ are jointly independent indicator random variables (i.e. they take values in $\{0,1\}$); see e.g. \cite[Corollary 4.2.24]{AGZ}.  The mean and variance of $N_I(W_n)$ can also be computed in the GUE case with a high degree of accuracy:

\begin{theorem}[Mean and variance for GUE]\label{mevar}  Let $M_n$ be drawn from GUE, let $W_n := \frac{1}{\sqrt{n}} M_n$, and let $I = [-\infty,x]$ for some real number $x$ (which may depend on $n$).  Let $\eps>0$ be independent of $n$.
\begin{itemize}
\item[(i)] (Bulk case) If $x \in [-2+\eps, 2-\eps]$, then
$$ \E N_I(W_n) = n \int_I \rho_\sc(y)\ dy + O\left( \frac{\log n}{n} \right).$$
\item[(ii)] (Edge case) If $x \in [-2,2]$, then
$$ \E N_I(W_n) = n \int_I \rho_\sc(y)\ dy + O(1).$$
\item[(iii)] (Variance bound) If one has $x \in [-2,2-\eps]$ and $n^{2/3} (2+x) \to \infty$ as $n \to \infty$, one has
$$ \Var N_I(W_n) = \left(\frac{1}{2\pi^2} + o(1)\right) \log (n (2+x)^{3/2}).$$
In particular, one has $\Var N_I(W_n) = O( \log n )$ in this regime.
\end{itemize}
\end{theorem}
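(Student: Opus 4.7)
The plan is to exploit the determinantal structure of GUE: the eigenvalues of $W_n$ form a determinantal point process on $\R$ with correlation kernel $K_n(x,y)$ given by a Christoffel--Darboux expression in rescaled Hermite functions. All three parts then follow from quantitative asymptotics of $K_n$.

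For parts (i) and (ii) we use the identity $\E N_I(W_n)=\int_I K_n(y,y)\,dy$. Plancherel--Rotach asymptotics give an expansion $K_n(y,y)=n\rho_\sc(y)+R_n(y)$ in which $R_n$ is $O(1)$ pointwise in the bulk but oscillates on scale $1/n$ with a slowly varying amplitude, and in which $R_n$ on the edge window of width $O(n^{-2/3})$ near $\pm 2$ is described by the Airy kernel and integrates to $O(1)$. Integrating $R_n$ by parts in the bulk turns the pointwise $O(1)$ into an $O(\log n/n)$ contribution (the $\log n$ arising from accumulation near the edges, where the bulk amplitude blows up like $(4-y^2)^{-1/2}$). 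This yields (i) for $x$ in the bulk, and (ii) on $[-2,2]$ by additionally tolerating the $O(1)$ Airy contribution at the endpoints.

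For part (iii) we use the exact determinantal variance identity
\[
\Var N_I(W_n)=\int_I\int_{I^c} K_n(u,v)^2\,du\,dv.
\]
For $I=(-\infty,x]$ the integrand concentrates near the diagonal point $(x,x)$. On the microscopic scale $1/(n\rho_\sc(x))$, $K_n$ is well approximated by the sine kernel $\sin(\pi t)/(\pi t)$, and the Costin--Lebowitz calculation produces the logarithmic main term. The cutoff for the logarithm is the macroscopic distance to the nearest (lower) edge: writing $t=2+x$, the expected number of eigenvalues in $[-2,x]$ is $\asymp nt^{3/2}$, and this is the natural scale at which the sine-kernel approximation breaks down, producing $\log(n(2+x)^{3/2})$ in the main term.

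The main obstacle is making the sine-kernel approximation quantitatively uniform down to $x=-2+n^{-2/3+o(1)}$, where the hypothesis $n^{2/3}(2+x)\to\infty$ is tight; in that transition zone between bulk and Airy asymptotics, sharp control of the Plancherel--Rotach error terms is required. In practice, parts (i), (ii), (iii) can all be invoked directly from classical results on Hermite polynomial asymptotics and the variance of linear statistics of determinantal ensembles (e.g.\ Costin--Lebowitz, Gustavsson), which is how we would present the proof in a short form.
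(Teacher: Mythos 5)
Your proposal is correct and ultimately takes the same route as the paper, which proves the theorem simply by citing Gustavsson's Lemmas 2.1--2.3 (with a remark that Gustavsson's normalization differs by $\sqrt{2}$, and a footnote correcting the stated main term in his Lemma 2.2). Your additional sketch of the underlying determinantal/Plancherel--Rotach/sine-kernel argument is a reasonable account of what goes into those lemmas, but the paper does not reprove them.
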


Here of course we use $X=O(Y)$, $X \ll Y$ or $Y \gg X$ to denote the estimate $|X| \leq CY$ for some quantity $C$ independent of $n$.  We will also use $c$ to denote various small positive constants $c>0$ independent of $n$ (but possibly depending on the constants in Condition {\condo}).

\begin{proof} See \cite[Lemmas 2.1, 2.2, 2.3]{Gus}.  Note that the normalization conventions in \cite{Gus} differ by a factor of $\sqrt{2}$ from the ones used here\footnote{There is a slight inaccuracy in the statement of \cite[Lemma 2.2]{Gus}, namely that the main term of $\frac{4\sqrt{2}}{3\pi} n(1-t)^{3/2}$ in that lemma should be replaced with the more accurate main term $\frac{2n}{\pi} \int_t^1 \sqrt{1-x^2}\ dx$ (which is what actually comes out of the proof of \cite[Lemma 2.2]{Gus}).  These two main terms differ by $O(1)$ in the regime $t = 1 - O(n^{-2/5})$ as can be seen from a Taylor expansion, but they differ by more than $O(1)$ outside of this regime.}.
\end{proof}

By combining these estimates with a well-known inequality of Bennett \cite{Ben62}, we obtain a concentration estimate for $N_I(W_n)$ in the GUE case:

\begin{corollary}[Concentration for GUE]\label{conc} Let $M_n$ be drawn from GUE, let $W_n := \frac{1}{\sqrt{n}} M_n$, and let $I$ be an interval.  Then one has
$$ \P( |N_I(W_n) - n \int_I \rho_\sc(y)\ dy| \geq T) \ll \exp( - c T )$$
for all $T \gg \log n$.
\end{corollary}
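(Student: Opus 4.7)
The plan is to apply Bennett's inequality \cite{Ben62} directly to the representation \eqref{etain}. Writing $S := N_I(W_n) - \E N_I(W_n) = \sum_i (\eta_i - \E \eta_i)$, the summands are independent, centered, and bounded by $1$ in absolute value, with total variance $\sigma^2 := \Var N_I(W_n)$. Bennett's inequality then gives
\begin{equation*}
\P(|S| \geq t) \leq 2 \exp\bigl(-\sigma^2 h(t/\sigma^2)\bigr), \qquad h(u) := (1+u)\log(1+u) - u.
\end{equation*}
Since $h(u) \gg u$ once $u$ exceeds an absolute constant, once I have established the two estimates $\sigma^2 = O(\log n)$ and $|\E N_I(W_n) - n \int_I \rho_{\sc}| = O(\log n)$, the choice $t := T/2$ with $T \geq C \log n$ (for a suitably large constant $C$) makes $t/\sigma^2$ a large constant, producing $\P(|S| \geq T/2) \leq \exp(-cT)$ and absorbing the mean correction into the remaining $T/2$.

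The core technical point is the variance estimate $\sigma^2 = O(\log n)$ for an arbitrary interval $I$. Theorem~\ref{mevar}(iii) addresses only the half-line case $I = (-\infty,x]$ with $x$ in the bulk, so for a general interval $I = (a,b]$ I would write $N_I = N_{(-\infty,b]} - N_{(-\infty,a]}$; since both counts are monotone functionals of the eigenvalue configuration they are positively correlated, giving
\begin{equation*}
\Var N_I \leq \Var N_{(-\infty,b]} + \Var N_{(-\infty,a]}
\end{equation*}
and reducing matters to the variance of half-line counts. When both endpoints lie in the regime of Theorem~\ref{mevar}(iii) this is immediate. Endpoints at or outside the spectral edge are handled by the Bai--Yin bound $\|W_n\| \leq 2 + o(1)$, which with overwhelming probability forces $N_{(-\infty,x]}$ to equal $0$ (for $x < -2 - o(1)$) or $n$ (for $x > 2 + o(1)$), yielding exponentially small variance there; in the transitional window near $\pm 2$ one has an $O(1)$ variance bound from the Airy-type asymptotics of the Hermite kernel underlying the proof of Theorem~\ref{mevar}. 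The estimate $|\E N_I - n \int_I \rho_{\sc}| = O(\log n)$ follows by the same subtraction from Theorem~\ref{mevar}(i,ii).

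The main obstacle is precisely the edge analysis for the variance, since Theorem~\ref{mevar}(iii) explicitly excludes a neighborhood of $\pm 2$. One can sidestep this either by quoting an edge-variance bound from Gustavsson's analysis (the Hermite-kernel computations there give an $O(1)$ variance in the Airy window) or, more crudely, by truncating $I$ at distance $O(n^{-2/3} \log^{O(1)} n)$ from $\pm 2$ and absorbing the $O(\log^{O(1)} n)$ eigenvalues in the truncated region into the threshold $T \gg \log n$. Everything else is a routine combination of the representation \eqref{etain} with Bennett's inequality.
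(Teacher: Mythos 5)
Your approach is essentially the same as the paper's (reduce to half-lines, read off the mean and variance from Theorem~\ref{mevar}, apply Bennett), but you arrange the two reduction steps in the opposite order, and one justification is shaky. The paper first applies the \emph{deterministic} triangle inequality
$$|N_{(a,b]} - n\textstyle\int_{(a,b]}\rho_\sc| \leq |N_{(-\infty,b]} - n\textstyle\int_{(-\infty,b]}\rho_\sc| + |N_{(-\infty,a]} - n\textstyle\int_{(-\infty,a]}\rho_\sc|,$$
which converts the tail bound for a general interval to two half-line tail bounds at level $T/2$, and then further uses monotonicity of $N$ to push the endpoint into $[-2,2]$; only at that point is \eqref{etain}/Bennett invoked, and only for a single half-line whose variance is controlled by Theorem~\ref{mevar}(iii). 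You instead apply \eqref{etain}/Bennett directly to the general interval $I$ and then try to control $\Var N_I$ by splitting into half-lines. That also works, but your claim that $N_{(-\infty,a]}$ and $N_{(-\infty,b]}$ are positively correlated ``since both are monotone functionals'' is not justified: the GUE eigenvalue process is determinantal and hence negatively associated, and the sign of $\Cov(N_{(-\infty,a]},N_{(-\infty,b]})$ is not obviously nonnegative. Fortunately you don't need it --- the elementary bound $\Var(X-Y)\leq 2\Var X+2\Var Y$ gives the same conclusion up to an irrelevant factor of $2$, or you can simply switch to the paper's triangle-inequality-first ordering, which sidesteps correlations entirely.

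You are right that there is an edge gap in applying Theorem~\ref{mevar}(iii), which only covers $x\in[-2,2-\eps]$ with $n^{2/3}(2+x)\to\infty$; the paper's proof quietly passes over this. The cleanest patch is the one you gesture at: for a sum of independent Bernoullis $\Var\leq\E$, and near $-2$ (say $2+x=O(n^{-2/3})$) one has $\E N_{(-\infty,x]}=O(1)$ from the Airy edge asymptotics, hence $\Var=O(1)$; the regime $x\in[2-\eps,2]$ then follows from the $x\leftrightarrow -x$ symmetry of GUE, since $\Var N_{(-\infty,x]}=\Var N_{(-\infty,-x]}$.
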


\begin{proof}  By the triangle inequality we may take $I = [-\infty,x]$ for some real number $x$.  As $\rho_\sc$ is supported on $[-2,2]$ and has total mass $1$, we see (using the trivial bounds $0 \leq N_I(W_n) \leq n$ and $N_I(W_n) \leq N_J(W_n)$ whenever $I \subset J$) that without loss of generality we may assume $x \in [-2,2]$.  By \eqref{etain} and Theorem \ref{mevar}, $N_I(W_n)$ is then the sum of independent indicator functions, and the mean $\mu$ and variance $\sigma^2$ of this sum is given by
$$ \mu = n \int_I \rho_\sc(y)\ dy + O(1)$$
and $\sigma^2 = O(\log n)$ respectively.  Bennett's inequality (see \cite{Ben62}, or \cite[p.29]{JLR}) then asserts that
$$ \P( |N_I(W_n) - \mu| \geq t ) \leq 2 \exp( - \sigma^2 \phi(\frac{t}{\sigma^2}) )$$
where $\phi(x) := (1+x) \log(1+x)-x$.  Since $\phi(x) \gg x$ when $x \gg 1$, the claim follows\footnote{Indeed, this argument shows a slightly better bound than $\exp(-cT)$.  One can also use Bernstein's inequality to also obtain the $\exp(-cT)$ bound if desired.}.  
\end{proof}

Let us say that an event holds with \emph{overwhelming probability} if it occurs with probability $1-O(n^{-A})$ for each fixed $A$.  From the above corollary we see in particular that in the GUE case, one has
$$ N_I(W_n) = n \int_I \rho_\sc(y)\ dy + O(\log^{1+o(1)} n)$$
with overwhelming probability for each fixed $I$, and an easy union bound argument (ranging over all intervals $I$ in, say, $[-3,3]$ whose endpoints are a multiple of $n^{-100}$ (say)) then shows that this is also true uniformly in $I$ as well.

\begin{remark} By using a general result of Costin and Lebowitz \cite{CL}, one can also obtain a central limit theorem for $N_I(W_n)$ as long as $I$ is not too small; see \cite{Gus}.  Such results have also been recently been extended to more general Wigner matrices in \cite{dall}.  However, such theorems will not be the focus of the current paper.
\end{remark}

Now we turn from the GUE case to more general Wigner ensembles.  There has been much interest in recent years in obtaining concentration results for $N_I(W_n)$ (and for closely related objects, such as the Stieltjes transform $s_{W_n}(z) := \frac{1}{n} \tr (W_n - z)^{-1}$ of $W_n$) for short intervals $I$, due to the applicability of such results to establishing various universality properties of such matrices; see \cite{ESY1, ESY2, ESY3, TVlocal1, TVedge, ESY, EYY, EYY2}.  The previous best result in this direction was by Erd\H{o}s, Yau, and Yin \cite{EYY2} (see also \cite{ekyy} for a variant):

\begin{theorem}\label{ween}\cite{EYY2} Let $M_n$ be a Wigner matrix obeying Condition {\condo}, and let $W_n := \frac{1}{\sqrt{n}} M_n$.  Then, for any interval $I$, one has
\begin{equation}\label{niwna}
\P( |N_I(W_n) - n \int_I \rho_\sc(y)\ dy| \geq T) \ll \exp( - c T^c )
\end{equation}
for all $T \geq \log^{A \log \log n} n$, and some constant $A>0$.
\end{theorem}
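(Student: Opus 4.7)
The plan is to go via a \emph{local semicircle law} for the Stieltjes transform $s_n(z) := \frac{1}{n}\tr(W_n - z)^{-1}$, showing that
$$|s_n(z) - m_\sc(z)| \leq \frac{(\log n)^{A\log\log n}}{n\eta}$$
holds with overwhelming probability, uniformly for $z = E + i\eta$ with $\eta \geq (\log n)^{A\log\log n}/n$. Here $m_\sc(z) = \int \rho_\sc(y)/(y-z)\,dy$ satisfies the fixed-point equation $m_\sc = -1/(z + m_\sc)$.

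The approximate self-consistent equation for $s_n$ is derived via the Schur complement identity: writing $G := (W_n - z)^{-1}$ and $G^{(i)}$ for the resolvent of the $(n-1)\times(n-1)$ minor obtained by omitting the $i$-th row and column, one has
$$G_{ii} = \frac{1}{-z - \xi_{ii}/\sqrt n - \frac{1}{n}\sum_{j,k\neq i}\xi_{ij}G^{(i)}_{jk}\overline{\xi_{ik}}}.$$
The quadratic form concentrates around $\frac{1}{n}\tr G^{(i)}$, which by eigenvalue interlacing differs from $s_n(z)$ by $O(1/(n\eta))$. The key technical input is a Hanson--Wright type large deviation bound for $X^*AX - \E X^*AX$ with sub-exponential coordinates, giving an exponential tail in terms of $\|A\|_{HS}$ and $\|A\|_{\mathrm{op}}$. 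Averaging over $i$ yields $s_n(z) = -1/(z + s_n(z)) + O(\Lambda_n(z))$, and stability of this equation near the attracting fixed point $m_\sc$ (which is genuinely contractive because $|m_\sc(z)|<1$ in the bulk) converts the error on the equation into the desired bound on $|s_n - m_\sc|$.

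The main obstacle is the \emph{bootstrap} pushing the estimate from the trivial scale $\eta = 1$ down to $\eta \sim (\log n)^{A\log\log n}/n$. A coarse a priori bound on $\Im s_n(z)$ at scale $\eta$ controls the Hilbert--Schmidt weights appearing in the Hanson--Wright input (using the identity $\|G\|_{HS}^2 = n\Im s_n/\eta$), and substituting this back produces a strictly better a posteriori bound at the same $\eta$ — essentially the square root of the previous error. Roughly $\log\log n$ iterations of this self-improvement are needed to bring a crude $n^\eps$-type estimate down to the optimal one, and each iteration loses an $O(\log n)$ factor from the tail of Hanson--Wright; this is precisely the source of the $(\log n)^{A\log\log n}$ term. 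A simultaneous bootstrap in $\eta$, decreasing geometrically, is needed to extend the bound all the way to the optimal scale, with a net argument plus deterministic Lipschitz estimates $\|\partial_z G\|_{\mathrm{op}} \leq \eta^{-2}$ converting a pointwise bound on a fine $z$-grid into a uniform one.

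Finally, to transfer the Stieltjes bound to a counting bound, one uses the Helffer--Sj\"ostrand formula, representing a smoothed version of $\mathbf{1}_I$ as an integral of $\Im G(z)$ against an almost-analytic extension of the smoothing. Choosing the regularization scale to match $\eta \sim (\log n)^{A\log\log n}/n$ recovers $N_I(W_n) = n\int_I \rho_\sc(y)\,dy + O((\log n)^{A\log\log n})$ with overwhelming probability. The subexponential tail $\exp(-cT^c)$ is inherited from the Hanson--Wright tail at each scale of the bootstrap; the exponent $c<1$ (rather than the sharp $c=1$ of Corollary~\ref{conc}) is precisely the defect that accumulates over the $O(\log\log n)$ iterations, and is the feature the present paper aims to remove under the additional vanishing third moment hypothesis via the Lindeberg exchange strategy.
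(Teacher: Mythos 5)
The paper gives no proof of this statement beyond the citation to \cite[Theorem 2.2]{EYY2}, and your outline is essentially a summary of that cited argument: a local semicircle law obtained from the Schur-complement self-consistent equation with Hanson--Wright concentration as the probabilistic input, a bootstrap/continuity argument in $\eta$ down to the scale $\log^{A\log\log n} n/n$, and a Helffer--Sj\"ostrand (or equivalent) step converting the Stieltjes-transform estimate into a bound on $N_I$. So you are following essentially the same route as the proof the paper defers to; the only caveats are that what you wrote is an outline of a lengthy argument rather than a self-contained proof, and that in \cite{EYY2} the $\log^{A\log\log n} n$ threshold enters through the control parameter chosen to absorb the subexponential tails in the large-deviation and fluctuation-averaging estimates, rather than literally as one factor of $\log n$ lost per bootstrap iteration as your heuristic suggests.
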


\begin{proof}  See \cite[Theorem 2.2]{EYY2}.
\end{proof}

One can reformulate \eqref{niwna} equivalently as the assertion that
$$ \P( |N_I(W_n) - n \int_I \rho_\sc(y)\ dy| \geq T) \ll \exp(\log^{O(\log \log n)} n) \exp( - c T^c )$$
for all $T>0$.

In particular, this theorem asserts that with overwhelming probability one has
$$N_I(W_n) = n \int_I \rho_\sc(y)\ dy + O( \log^{O(\log\log n)} n )$$
for all intervals $I$.  The proof of the above theorem is somewhat lengthy, requiring a delicate analysis of the self-consistent equation of the Stieltjes transform of $W_n$.  A recent preprint of G\"otze and Tikhomirov \cite{gotze} has claimed\footnote{At the current time of writing, the preprint \cite{gotze} is being revised to address some gaps in the proofs of some lemmas in that paper, specifically Lemmas 5.2 and 5.3 from \cite{gotze} (private communication).} an improvement to this result, namely that
\begin{equation}\label{niww}
N_I(W_n) = n \int_I \rho_\sc(y)\ dy + O( \log^C n )
\end{equation}
with probability $1 - O(\exp(-c \log n (\log \log n)^\alpha))$ for certain explicit exponents $C,\alpha$.  This claim would imply as a consequence that for any interval $I$, $N_I(W_n)$ has variance $O(\log^{O(1)} n)$.  

Comparing Theorem \ref{ween} with the previous results for the GUE case, we see that there is a loss of a double logarithm $\log \log n$ in the exponent.  The first main result of this paper\footnote{We would like to thank M. Ledoux for a private conversation that led to this question.} is to remove this double logarithmic loss, at least under an additional vanishing moment assumption:

\begin{theorem}[First main theorem]\label{main} Let $M_n$ be a Wigner matrix obeying Condition \condo, and let $W_n := \frac{1}{\sqrt{n}} M_n$.  Assume that $M_n$ matches moments with GUE to third order off the diagonal (i.e. $\Re \xi_{ij}, \Im \xi_{ij}$ have variance $1/2$ and third moment zero).  Then, for any interval $I$, one has
$$
\P( |N_I(W_n) - n \int_I \rho_\sc(y)\ dy| \geq T) \ll n^{O(1)} \exp( - c T^c )$$
for any $T > 0$.
\end{theorem}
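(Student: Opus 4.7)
The strategy is the Lindeberg exchange (or four moment) method: compare $W_n$ entry by entry with a GUE matrix $W_n^*$, so that at each swap the vanishing-third-moment hypothesis collapses the Taylor expansion in the entry to fourth order. Corollary \ref{conc} already supplies a subexponential tail bound for $W_n^*$, so the task reduces to controlling the cumulative error through $O(n^2)$ individual exchanges.

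Because $N_I$ is not a smooth function of the matrix entries, the first step is to pass to a smoothed statistic $\tr\,\psi(W_n) = \sum_i \psi(\lambda_i(W_n))$, where $\psi$ is a smoothed indicator of $I$ at scale $\eta = \log^C n / n$. By the Helffer--Sj\"ostrand functional calculus, $\tr\,\psi(W_n)$ can be written as an explicit integral of the Stieltjes transform $s_{W_n}(z) = \frac{1}{n}\tr(W_n-z)^{-1}$ against a smooth weight supported on the strip $I + i[-\eta,\eta]$. The rigidity bound of Theorem \ref{ween} (which operates on the coarser scale $\log^{A\log\log n} n/n$) then allows one to control $|N_I(W_n) - \tr\,\psi(W_n)|$ by $\log^{O(\log\log n)} n$ with overwhelming probability, and similarly to replace $n\int_I \rho_{\sc}$ by $n\int\psi\,\rho_{\sc}$. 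It therefore suffices to prove the concentration inequality for the smoothed statistic.

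Next I would control the high moments $\E|\tr\,\psi(W_n) - \mu|^{2k}$, with $\mu := n\int\psi\,\rho_{\sc}$, by Lindeberg comparison with the GUE analogue $\E|\tr\,\psi(W_n^*) - \mu|^{2k}$. At each swap of a single off-diagonal entry $\xi_{ij}\leftrightarrow \xi_{ij}^*$, Taylor expand the integrand to fourth order in the $O(n^{-1/2})$ perturbation; Condition \condo{} provides the required uniform tail control, while the moment-matching hypothesis kills the zeroth through third order contributions, leaving only a fourth-order remainder. This remainder is a polynomial in Green's function entries $G_{ab}(z) = ((W_n-z)^{-1})_{ab}$ for $z$ in the Helffer--Sj\"ostrand support, which one bounds using the a priori estimates $|G_{ab}(z)| = n^{o(1)}$ and $\sum_a |G_{ab}(z)|^2 = n^{o(1)}/\eta$ that follow from Theorem \ref{ween}. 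The net contribution per swap is $n^{-2+o(1)}$ times a polynomial in $k$ times a lower-order moment of $\tr\,\psi - \mu$; summing over $O(n^2)$ swaps and iterating yields the bound
$$\E|\tr\,\psi(W_n) - \mu|^{2k} \leq (Ck\log n)^{2k}\, n^{o(1)}.$$
Markov's inequality and the smoothing step then give $\P(|N_I(W_n) - n\int_I\rho_{\sc}| \geq T) \ll n^{O(1)}\exp(-cT/\log^{O(1)}n)$, which is stronger than the claimed $n^{O(1)}\exp(-cT^c)$ bound (and is trivial for small $T$).

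The main obstacle will be securing fourth-derivative bounds sharp enough that the cumulative Lindeberg error does not swamp the modest GUE scale of $O(\log n)$. This requires using the rigidity of Theorem \ref{ween} as a priori input to replace the trivial $O(1/\eta)$ bound on resolvent entries by the much sharper $n^{o(1)}$ bound, and keeping careful track of the many Green's function products and combinatorial $k$-factors produced when differentiating a $2k$-th power of a large Stieltjes-type sum.
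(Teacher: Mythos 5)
Your overall strategy --- Lindeberg exchange applied to a high moment of a resolvent-based quantity, seeded by the GUE concentration from Corollary \ref{conc} --- is indeed the one used in the paper. However, there is a genuine gap in your smoothing step. You invoke Theorem \ref{ween} to control $|N_I(W_n) - \tr\,\psi(W_n)|$ by $\log^{O(\log\log n)} n$, but that estimate is already no better than the scale at which Theorem \ref{ween} itself is useful. Once a smoothing error of size $\log^{O(\log\log n)}n$ is introduced, no subsequent concentration of $\tr\psi - \mu$ can produce a nontrivial bound on $\P(|N_I - n\int_I\rho_\sc|\geq T)$ in the range $T\ll\log^{O(\log\log n)}n$; the smoothing error dominates. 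As written, then, the argument cannot produce your claimed $n^{O(1)}\exp(-cT/\log^{O(1)}n)$, and it cannot improve on Theorem \ref{ween}. The fix is not to use rigidity for this step at all: the difference $|N_I - \tr\psi|$ is controlled by the eigenvalue count in two windows of width $O(\eta)$ about $\partial I$, and for that one only needs the crude \emph{upper} bound $N_J=O(n|J|)$ with overwhelming probability (which is elementary and proved directly, independently of \cite{EYY2}, in Proposition \ref{crudo} of the Appendix), giving $|N_I-\tr\psi|=O(n\eta)=O(\log^C n)$ with no $\log\log n$ loss. For the same reason the paper never invokes Theorem \ref{ween}: the only a priori resolvent bound it feeds into the Lindeberg step is $\|R(z)\|_{(\infty,1)}=O(1)$ from Corollary \ref{rb}, also proved from scratch.

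A second, more technical issue is the combinatorial $k$-factors which you flag but do not resolve. When you Taylor-expand the $2k$-th power of $\tr\psi - \mu$ in the swapped entry, the fourth-order remainder acquires factors of order $k^4$ from the chain rule; a per-swap error of order $k^4 n^{-2}$ times lower moments, iterated over $n^2$ swaps, gives a multiplicative loss of order $e^{ck^4}$ unless a cancellation is exhibited, and with $k\sim T/\log n\gg\log n$ this is fatal. The paper's device (proof of Proposition \ref{stab-mom}) is to split on whether the deterministic leading term $|A_0|$ is $\le k$ or $>k$: when small, the entire contribution to the $k$-th moment is $O(k)^k$; when large, one rewrites $A(W_n)=A_0\bigl(1+\frac{1}{k}(\cdots)\bigr)$ so that raising to the $k$-th power produces $O(1)$ Taylor coefficients rather than ones growing polynomially in $k$. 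Some such mechanism is required in your argument as well. Beyond these two points, your Helffer--Sj\"ostrand route is a legitimate alternative to the paper's more explicit $\Arg$-identity reduction in Section \ref{stieltjes-sec}; the paper's choice to concentrate the single-point quantity $A(W_n,z)=n\eta(s_{W_n}(z)-s_\sc(z))$ first and then integrate over $\eta$ afterward is somewhat cleaner, since the union bound over $z$ is performed before, rather than inside, the Lindeberg moment comparison.
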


This estimate is phrased for any $T$, but the bound only becomes non-trivial when $T \gg \log^C n$ for some sufficiently large $C$.   In that regime, we see that this result removes the double-logarithmic factor from Theorem \ref{ween}; it is also comparable to the result \eqref{niww} from \cite{gotze} when $T = \log^{O(1)} n$ (though not with as sharp a set of exponents as \cite{gotze}, and one also needs an additional moment matching hypothesis), but gives additional large deviation bounds when $T$ is much larger than $\log^{O(1)} n$.

\begin{remark}  As we are assuming $\Re(\xi_{ij})$ and $\Im(\xi_{ij})$ to be independent, the moment matching condition simplifies to the constraints that $\E \Re(\xi_{ij})^2 = \E \Im(\xi_{ij})^2 = \frac{1}{2}$ and $\E \Re(\xi_{ij})^3 = \E \Im(\xi_{ij})^3 = 0$.  However, it is possible to extend this theorem to the case when the real and imaginary parts of $\xi_{ij}$ are not independent; see Remark \ref{non-indep}.
\end{remark}

\begin{remark}
The constant $c$ in the bound in  Theorem \ref{main} is quite decent in several cases. For instance, if the atom variables of $M_n$ are Bernoulli or have sub-gaussian tail, then we can set $c= 2/5-o(1)$ by optimizing our arguments (details omitted).  If we assume 4 matching moments rather than 3, then we can set $c=1$ (see Remark \ref{remark:bestT}), matching the bound in Corollary \ref{conc}.  It is an  interesting question to determine the best value of $c$. The value of $c$ in \cite{EYY} is implicit and rather small.
\end{remark} 

We prove Theorem \ref{main} in Sections \ref{stieltjes-sec}-\ref{stable-sec}.  Our argument differs from that in \cite{EYY2} in that it only uses a relatively crude analysis of the self-consistent equation to obtain some preliminary bounds on the Stieltjes transform and on $N_I$ (which were also essentially implicit in previous literature).  Instead, the bulk of the argument relies on using the Lindeberg swapping strategy to deduce concentration of $N_I(W_n)$ in the non-GUE case from the concentration results in the GUE case provided by Corollary \ref{conc}.  In order to keep the error terms in this swapping under control, three matching moments\footnote{Compare with the ``four moment theorem'' from \cite{TVlocal1}.  We need one less moment here because we are working at ``mesoscopic'' scales (in which the number of eigenvalues involved is much larger than $1$) rather than at ``microscopic'' scales.  However, in Theorem \ref{main-2} below, only one eigenvalue is involved, making the problem microscopic enough to require four moments instead of three.} are needed.

Very roughly speaking, the main idea of the argument is to show that high moments such as
$$ \E |N_I(W_n) - n \int_I \rho_\sc(y)\ dy|^k$$
are quite stable (in a multiplicative sense) if one swaps (the real or imaginary part of) one of the entries of $W_n$ (and its adjoint) with another random variable that matches the moments of the original entry to third order.  For technical reasons, however, we do not quite manipulate $N_I(W_n)$ directly, but instead work with a proxy for this quantity, namely a certain integral of the Stieltjes transform of $W_n$.  As observed in \cite{EYY}, the Lindeberg swapping argument is quite simple to implement at the level of the Stieltjes transform (due to the simplicity of the resolvent identities, when compared against the rather complicated Taylor expansions of individual eigenvalues used in \cite{TVlocal1}).

The result in Theorem \ref{main} is well suited for controlling eigenvalues in the bulk of the spectrum, but is not sufficient by itself to control eigenvalues at the edge, and in particular the largest eigenvalue $\lambda_1(W_n)$ and the smallest eigenvalue $\lambda_n(W_n)$.  However, it is known that these eigenvalues are highly concentrated around $+2$ and $-2$ respectively. In the GUE case, we have the following concentration result of Aubrun \cite{aubrun}:

\begin{theorem}[Concentration for GUE]\label{conc-extreme}\cite{aubrun} Let $M_n$ be drawn from GUE, let $W_n := \frac{1}{\sqrt{n}} M_n$.  Then one has
$$ \P( n^{2/3} (\lambda_1(W_n) - 2) \geq T ) \ll \exp( - c T^{3/2} )$$
for all $T > 0$.  By symmetry, we also have
$$ \P( n^{2/3}(-\lambda_n(W_n) - 2) \geq T ) \ll \exp( - c T^{3/2} ).$$
\end{theorem}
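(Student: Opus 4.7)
The plan is to exploit the determinantal structure of the GUE eigenvalue process, combined with a first-moment method for the moderate deviation regime and a Gaussian concentration argument for the very large deviation regime. Set $x_T := 2 + T n^{-2/3}$, so we need $\P(\lambda_1(W_n) \geq x_T) \ll \exp(-cT^{3/2})$. By the determinantal representation \eqref{etain} used in the proof of Corollary \ref{conc}, the eigenvalue counting function $N_{[x_T,\infty)}(W_n)$ is $\N$-valued, so Markov's inequality gives
\begin{equation*}
\P(\lambda_1(W_n) \geq x_T) = \P(N_{[x_T,\infty)}(W_n) \geq 1) \leq \E N_{[x_T,\infty)}(W_n) = \int_{x_T}^\infty K_n(y,y)\,dy,
\end{equation*}
where $K_n$ is the (rescaled) Hermite kernel of GUE. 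The task reduces to estimating this tail integral.

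I would split into two regimes in $T$. First, the moderate deviation range $0 < T \leq \eps n^{2/3}$ (so that $x_T$ lies close to the edge $2$). Here I would invoke the classical Plancherel--Rotach asymptotics for Hermite functions to obtain uniform pointwise bounds of the form $K_n(y,y) \ll n^{1/3}\exp(-c(n^{2/3}(y-2))^{3/2})$ for $y \in [2, 2+\eps]$, which reflect the Airy-kernel behavior of $K_n$ near the edge (the Airy function satisfies $\operatorname{Ai}(s) \ll s^{-1/4} e^{-\frac{2}{3}s^{3/2}}$, and the Airy kernel on the diagonal has an analogous super-exponential decay). Integrating from $x_T$ to the end of the edge window yields the desired bound $\exp(-cT^{3/2})$, with the outer contribution $y \geq 2+\eps$ absorbed into the next regime.

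For the very large deviation range $T > \eps n^{2/3}$, the above first-moment bound is wasteful; instead I would use Gaussian concentration. Since the atoms of a GUE matrix are jointly Gaussian with variances of order one and the map $M_n \mapsto \lambda_1(W_n) = n^{-1/2}\lambda_1(M_n)$ is $O(n^{-1/2})$-Lipschitz in the Hilbert--Schmidt (equivalently, Gaussian) coordinates, the standard concentration of measure for Gaussian Lipschitz functionals gives
\begin{equation*}
\P\bigl(\lambda_1(W_n) \geq \E \lambda_1(W_n) + s\bigr) \ll \exp(-c n s^2) \qquad (s>0).
\end{equation*}
Combined with the classical fact $\E \lambda_1(W_n) = 2 + O(n^{-2/3})$, taking $s = \tfrac{1}{2} T n^{-2/3}$ gives $\exp(-cT^2 n^{-1/3})$. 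In the regime $T \geq \eps n^{2/3}$ one has $T^{1/2} \gtrsim n^{1/3}$, so $T^2 n^{-1/3} \gtrsim T^{3/2}$, recovering $\exp(-cT^{3/2})$ as required. The opposite tail for $\lambda_n(W_n)$ follows from the symmetry $M_n \mapsto -M_n$ of GUE.

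The principal technical difficulty is the uniform Hermite kernel analysis in the first regime. The Airy-kernel universality limit only directly controls $y - 2 = O(n^{-2/3})$, so one must either invoke a sharper version of Plancherel--Rotach valid uniformly for $y - 2$ ranging from $n^{-2/3+o(1)}$ up to a small constant, or use classical pointwise Hermite polynomial estimates (e.g.\ the super-exponential decay of $h_k(y)^2$ outside $[-2\sqrt{k},2\sqrt{k}]$) to control $K_n(y,y)$ on the intermediate window. Once this uniform edge estimate is established, the remainder of the argument is essentially mechanical.
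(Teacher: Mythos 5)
The paper gives no proof of this theorem --- it is cited directly from Aubrun \cite{aubrun} --- so the comparison is against Aubrun's argument rather than anything in the text. Your moderate-deviation step follows the same first-moment strategy Aubrun uses: bound $\P(\lambda_1(W_n) \geq x_T)$ by the first intensity $\E N_{[x_T,\infty)}(W_n) = \int_{x_T}^\infty K_n(y,y)\,dy$ and control the diagonal of the Hermite kernel by edge asymptotics. The substantive technical content, which you rightly flag at the end, is making the bound $K_n(y,y) \ll n^{1/3}\exp\bigl(-c(n^{2/3}(y-2))^{3/2}\bigr)$ uniform as $y-2$ ranges from the Airy scale $n^{-2/3}$ up to a small constant; this is the heart of the matter in Aubrun's proof and is not ``essentially mechanical.'' Your handling of the very-large-$T$ regime $T \gtrsim n^{2/3}$ via Gaussian concentration is a valid and arguably tidier alternative to continuing the Hermite analysis far from the edge: for GUE (where $\sigma^2=1$) the Hilbert--Schmidt norm coincides with the Cameron--Martin norm of the underlying Gaussian measure, $\lambda_1$ is $1$-Lipschitz in that norm, so $\lambda_1(W_n)$ is $n^{-1/2}$-Lipschitz, giving $\P(\lambda_1(W_n) \geq m + s) \leq e^{-ns^2/2}$ about the median $m$; since $m = 2 + O(n^{-2/3})$ (deducible from the first-moment bound you have already set up, without circularity), your observation that $T^2 n^{-1/3} \gtrsim T^{3/2}$ once $T \gtrsim n^{2/3}$ converts this into the stated rate. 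The regime $T=O(1)$ is vacuous since the right-hand side is then comparable to $1$, so restricting to $T$ above a fixed constant is legitimate. In sum, the plan is correct; the only genuine gap is the one you identify yourself, namely that the uniform Plancherel--Rotach estimate for the Hermite kernel across the whole window $[2,2+\eps]$ must actually be supplied.
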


\begin{remark} As is well known, the random variable $n^{2/3} (\lambda_1(W_n)-2)$ in fact converges in distribution to the Tracy-Widom law \cite{TW}.  However, we will not focus on this law here.  The exponent $3/2$ on the right-hand side cannot be improved (indeed, it matches the decay rate of the Tracy-Widom law); see \cite{aubrun} for further discussion.
\end{remark}

This result was partially extended to the Wigner case in \cite{EYY2}:

\begin{theorem}\label{ween-extreme}\cite{EYY2} Let $M_n$ be a Wigner matrix obeying Condition \condo, and let $W_n := \frac{1}{\sqrt{n}} M_n$.  Then one has
\begin{equation}\label{ltc}
 \P( n^{2/3} (\lambda_1(W_n) - 2) \geq T ) \ll \exp( - c T^c )
\end{equation}
for all $T \geq \log^{A \log \log n} n$, for some $A >0$ independent of $n$.  By symmetry, one also has
$$ \P( n^{2/3} (-\lambda_n(W_n) - 2) \geq T ) \ll \exp( - c T^c ).$$
\end{theorem}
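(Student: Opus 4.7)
The plan is to prove Theorem \ref{ween-extreme} via the Sinai--Soshnikov high-moment method, with a crude operator norm bound covering the very-large-$T$ regime. The key ingredient is the moment estimate
$$ \E \trace (W_n^{2k}) \leq n \cdot 4^k \cdot e^{O(1)} $$
valid for all $k \leq c n^{2/3}$ and any Wigner matrix obeying Condition \condo, after truncating the entries at polylogarithmic scale (harmless under \eqref{exp-decay}). This is obtained by expanding $\trace(W_n^{2k})$ as a weighted sum over closed walks of length $2k$ on $[n]$: the leading term $n \cdot 4^k$ comes from walks pairing each traversed edge with a unique partner (yielding the Catalan enumeration), and the subleading corrections are bounded via the combinatorial analysis of Sinai and Soshnikov.

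Applying Markov's inequality to $\lambda_1(W_n)^{2k} \leq \trace(W_n^{2k})$ with $k = c n^{2/3}$ then gives
$$ \P\bigl(\lambda_1(W_n) \geq 2 + T n^{-2/3}\bigr) \leq \frac{\E \trace(W_n^{2k})}{(2 + T n^{-2/3})^{2k}} \leq n \cdot e^{O(1) - c' T}, $$
so $\P \leq \exp(-cT)$ whenever $T \geq C_0 \log n$. Since $\log^{A \log \log n} n \gg \log n$ for $n$ large enough, this handles the full stated range up to $T \leq c n^{2/3}$. For $T \geq c n^{2/3}$, the event $\lambda_1(W_n) \geq 2 + Tn^{-2/3}$ forces $\|W_n\|_{op} \geq 2 + c$, which under Condition \condo\ has probability at most $\exp(-c n^{c'})$ via the Bai--Yin bound combined with truncation and concentration; this gives a bound of the form $\exp(-cT^{c''})$ for suitable $c'' > 0$, and the symmetric statement for $\lambda_n(W_n)$ follows by replacing $M_n$ with $-M_n$.

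The main technical obstacle is the Sinai--Soshnikov moment estimate for Wigner matrices with general (possibly complex and non-symmetric) atom distributions allowed under Condition \condo: walks with edges of odd multiplicity no longer vanish and must be bounded via a Cauchy--Schwarz reduction to walks of even multiplicity, with the bookkeeping of subleading contributions being delicate (though by now standard in the literature). A secondary issue is the truncation step, where one must verify that replacing $\xi_{ij}$ by $\xi_{ij} \mathbf{1}_{|\xi_{ij}| \leq \log^C n}$ perturbs the spectrum of $W_n$ by an amount negligible compared to $Tn^{-2/3}$, except on an event of probability at most $\exp(-c(\log n)^{c'})$; this follows from \eqref{exp-decay} combined with Weyl's inequality.
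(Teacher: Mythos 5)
The paper does not actually prove this statement; its ``proof'' is a one-line citation to \cite[Theorem 2.1]{EYY2}, where the result is established via a Stieltjes-transform analysis of the self-consistent equation. Your approach via the Sinai--Soshnikov trace-moment method is therefore a genuinely different route, and in principle a natural one, but as written it has a gap that prevents it from reaching the stated conclusion.

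The gap is in the truncation step. Condition \condo\ only gives $\P(|\xi_{ij}| \geq s) \leq e^{-s^{1/C}}$, so truncating the entries at a polylogarithmic level $\log^B n$ leaves a failure event of probability roughly $n^2 \exp(-(\log n)^{B/C})$, i.e. $\exp(-(\log n)^{O(1)})$. The target bound, however, is $\exp(-cT^c)$, and already at the bottom of the stated range $T = \log^{A\log\log n} n$ one has $T^c = (\log n)^{cA\log\log n}$, which grows faster than any fixed power of $\log n$. Thus your truncation error \emph{dominates} the target probability throughout the entire range of $T$ you need, not just for large $T$, and the conclusion does not follow. To make the truncation error acceptable you would have to truncate at a level $L$ with $L^{1/C} \gg T^c$, i.e. $L \gg T^{cC}$, which for $T$ comparable to $n^{2/3}$ is a power of $n$ (and a large one if $C$ is large, since $C$ is whatever constant the sub-exponential hypothesis hands you). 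After such a truncation the normalized entries $\xi_{ij}/\sqrt{n}$ need not be $o(1)$, and the combinatorial control of odd-multiplicity edges and high-degree vertices in the Sinai--Soshnikov expansion is no longer ``standard bookkeeping'' --- it is precisely the hard part. In other words, the step you flag as a ``secondary issue'' is actually the crux, and the step you flag as ``delicate but standard'' is not available in the literature at the generality of Condition \condo\ without essentially reproving a result of comparable depth to \cite{EYY2}. A related symptom: if your argument went through as written it would give $\exp(-cT)$ for all $T \gg \log n$, which is strictly stronger than both \cite{EYY2} and Theorem \ref{main-2} of this paper (the latter additionally assumes four matching moments); that should raise a flag that something has been swept under the rug.

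The Markov/moment bookkeeping itself (dividing $n \cdot 4^k e^{O(1)}$ by $(2+Tn^{-2/3})^{2k}$ with $k = cn^{2/3}$, and the crude handling of $T \gtrsim n^{2/3}$ via an operator-norm large-deviation bound) is fine, and closely parallels the treatment of the large-$T$ regime that the paper itself uses in Section \ref{second-sec} when proving Theorem \ref{main-2}. The problem is specifically that you cannot simultaneously (a) truncate lightly enough to keep the moment computation tractable and (b) truncate heavily enough to make the truncation failure probability negligible against $\exp(-cT^c)$ over the full stated range of $T$, at least not without substantial additional work on the moment estimate for entries of polynomial size.
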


\begin{proof} See \cite[Theorem 2.1]{EYY2}.
\end{proof}

As before, we can reformulate \eqref{ltc} equivalently as the assertion that
$$ \P( n^{2/3} (\lambda_1(W_n) - 2) \geq T ) \ll \exp( \log^{O(\log \log n)} n) \exp( - c T^c )$$
for all $T>0$.

Our second main result is to remove the double logarithm from Theorem \ref{ween-extreme}, at the cost of requiring matching GUE to fourth order rather than to third order:

\begin{theorem}[Second main theorem]\label{main-2} Let $M_n$ be a Wigner matrix obeying Condition \condo, and let $W_n := \frac{1}{\sqrt{n}} M_n$.  Assume that $M_n$ matches moments with GUE to fourth order off the diagonal and second order on the diagonal (i.e. $\sigma^2=1$).  Then one has
$$ \P( n^{2/3} (\lambda_1(W_n) - 2) \geq T ) \ll n^{O(1)} \exp( - c T^c )$$
for any $T > 0$.  By symmetry, one then also has
$$ \P( n^{2/3} (-\lambda_n(W_n) - 2) \geq T ) \ll n^{O(1)} \exp( - c T^c )$$
\end{theorem}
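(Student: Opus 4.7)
The strategy mirrors the Lindeberg swapping argument used for Theorem \ref{main}, but is adapted to the edge regime: the role played by Corollary \ref{conc} in the bulk case is now taken over by Aubrun's edge bound (Theorem \ref{conc-extreme}). The reason we need four matching moments (instead of three) is that the edge observable lives on scale $n^{-2/3}$ rather than the bulk scale $n^{-1}$, so its derivatives in the entries of $M_n$ are larger by a factor of $n^{1/2}$ or so, and one extra order of cancellation in the Taylor expansion is needed to compensate.

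I would reduce to comparing expectations of a smooth spectral observable. Let $\chi=\chi_T:\R\to[0,1]$ be smooth with $\chi\equiv 1$ on $[2+Tn^{-2/3},\infty)$, $\chi\equiv 0$ on $(-\infty,2+\tfrac{T}{2}n^{-2/3}]$, and $\|\chi^{(j)}\|_\infty\ll_j (n^{2/3}/T)^{j}$. Since $\lambda_1(W_n)\geq 2+Tn^{-2/3}$ forces $\tr \chi(W_n)\geq 1$, Markov gives
\[
\P\bigl(\lambda_1(W_n)\geq 2+Tn^{-2/3}\bigr)\leq \E\,\tr\chi(W_n),
\]
and for a GUE matrix $W_n^{*}$, Theorem \ref{conc-extreme} and monotonicity give $\E\,\tr\chi(W_n^{*})\ll \exp(-cT^{3/2})$. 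Thus it suffices to establish a Lindeberg-swap estimate
\[
\bigl|\E\,\tr\chi(W_n)-\E\,\tr\chi(W_n^{*})\bigr|\ll n^{O(1)}\exp(-cT^{c}).
\]
Represent $\chi(W_n)$ via the Helffer-Sj\"ostrand formula, so that $\tr\chi(W_n)$ is an integral of the Stieltjes transform $s_{W_n}(z)$ against $\dd_{\bar z}\tilde\chi$, where $\tilde\chi$ is an almost-analytic extension of $\chi$. Swap the pair $(\xi_{pq},\xi_{qp})$ (one pair at a time, to preserve Hermiticity) for a GUE pair with matching moments, and Taylor-expand in the scaled real and imaginary parts of $\xi_{pq}/\sqrt n$. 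The four-moment matching off the diagonal (and the matching variance $\sigma^{2}=1$ on the diagonal) kills the contributions of orders $0,1,2,3,4$ (resp.\ $0,1,2$), so each swap contributes at most the fifth-order (resp.\ third-order) Taylor remainder, of size $O(n^{-5/2})$ times the supremum of the appropriate derivative of $\tr\chi$ in that entry.

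Through the Helffer-Sj\"ostrand representation, these derivatives reduce to sums of products of resolvent matrix entries $G_{ij}(z)=((W_n-z)^{-1})_{ij}$, for $z$ in the support of $\dd_{\bar z}\tilde\chi$. Here I would import the a priori edge local semicircle law of \cite{EYY2} (the bounds underlying Theorem \ref{ween-extreme}), giving $|G_{ij}(z)|\ll n^{o(1)}$ with overwhelming probability for all $z$ in a shrunken neighborhood of the spectral edge; combined with $\|\chi^{(j)}\|_\infty\ll (n^{2/3}/T)^{j}$, this yields a swap error of size at most $n^{-5/2+o(1)}\,\text{poly}(T/n^{2/3})$, and summing over the $O(n^{2})$ swaps gives a total Lindeberg error $\ll n^{-1/2+o(1)}$, comfortably within the $n^{O(1)}\exp(-cT^{c})$ budget. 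Truncate to the range $\log^{C}n\leq T\leq n^{2/3-c}$: for smaller $T$ the target probability exceeds $1$ and the bound is trivial, and for larger $T$ Theorem \ref{ween-extreme} already furnishes a bound of the desired form (or one uses the Bai-Yin $\|W_n\|\leq 3$ bound).

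The main obstacle is ensuring that the a priori resolvent bounds hold not just at the endpoints $W_n$ and $W_n^{*}$ of the swap, but uniformly through the interpolation. At intermediate steps the matrix has entries drawn from mixed distributions and is not an i.i.d.\ Wigner matrix in a strict sense, yet Condition \condo\ is preserved with uniform constants (since it is an entry-by-entry condition), so Theorem \ref{ween-extreme} applies uniformly, as does the underlying isotropic local law. A secondary subtlety is the need to work on a ``good event'' where the resolvent bounds hold, while showing that the complementary ``bad event,'' of probability $n^{-\omega(1)}$, contributes negligibly even after being multiplied by the trivial deterministic bound $\tr\chi(W_n)\leq n$; this is where the polynomial prefactor $n^{O(1)}$ in the conclusion appears.
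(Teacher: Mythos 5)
There is a genuine gap at the heart of your comparison step. You apply the Lindeberg/Green-function comparison to the single fixed observable $\E\,\tr\chi(W_n)$, and (as you correctly compute) with four matching moments this yields an \emph{additive} error of size $n^{-1/2+o(1)}$ after summing over the $O(n^2)$ swaps. But the theorem demands $\P(n^{2/3}(\lambda_1(W_n)-2)\geq T)\ll n^{O(1)}\exp(-cT^c)$, and in your working range $\log^{C}n\leq T\leq n^{2/3-c}$ the right-hand side is superpolynomially small as soon as $T^{c}\gg \log n$: for instance at $T=n^{1/10}$ the budget is $n^{O(1)}\exp(-c\,n^{c/10})$, vastly smaller than $n^{-1/2+o(1)}$. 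So the assertion that the swap error is ``comfortably within the $n^{O(1)}\exp(-cT^{c})$ budget'' fails; your argument proves the claimed tail only for $T\ll\log^{1/c}n$. Deferring to Theorem \ref{ween-extreme} only for $T\geq n^{2/3-c}$ does not repair this, and even if you invoked it for all $T\geq\log^{A\log\log n}n$, the intermediate range $\log^{O(1)}n\ll T\ll\log^{A\log\log n}n$ --- which is precisely the new content of Theorem \ref{main-2} --- is still not covered, since there $\exp(-cT^{c})\ll n^{-K}$ for every fixed $K$ while your error stays at $n^{-1/2+o(1)}$. A fixed test function compared in expectation can never produce tail bounds smaller than the comparison error.

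The paper avoids this loss by running the swap at the level of high moments of a $T$-dependent statistic: it sets $\eta=n^{-2/3}$, $B(W_n)=n\eta\,\Im s_{W_n}(E+\sqrt{-1}\eta)$ for $E$ on an $n^{-2/3}$-grid in $[2+n^{-2/3}T,3]$, notes that $\lambda_1\in[2+n^{-2/3}T,3]$ forces $B(W_n)\geq 1/10$ at some grid point, and bounds $\P(B(W_n)\geq 1/10)$ via $\E B(W_n)^{k}$ with $k\sim T^{c_0}$. For GUE, Aubrun's bound together with Corollary \ref{conc} give $B(W'_n)\ll T^{-1/2}$ outside an event of probability $n^{O(1)}\exp(-cT^{c})$, hence $\E B(W'_n)^{k}\ll O(1/T)^{k/2}+n^{O(k)}\exp(-cT^{c})$; Proposition \ref{stab-mom-2} shows each swap changes $\E B^{k}$ only by a \emph{multiplicative} factor $1+O((k/\sqrt n)^{m+1})$ plus additive terms $O(100^{-k})+O(n^{O(k)}\exp(-cT^{c}))$, using only the deterministic resolvent stability (Proposition \ref{stabres}) and the resolvent bound of Corollary \ref{rb}, which holds uniformly along the interpolation without any edge rigidity or isotropic law input. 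Markov's inequality at level $k\sim T^{c_0}$ then converts this into the $\exp(-cT^{c})$ tail. To salvage your route you would need the compared observable itself to carry exponential sensitivity in $T$ (e.g.\ a $k$-th power with $k$ growing in $T$, which is exactly the paper's device); the smooth cutoff $\chi_T$ alone cannot do this.
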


We will derive Theorem \ref{main-2} from Theorem \ref{conc-extreme} in Section \ref{second-sec} using the same techniques used to derive Theorem \ref{main} from Corollary \ref{conc}.

By combining Theorem \ref{main} and Theorem \ref{main-2} one can ``solve'' for individual eigenvalues $\lambda_i(W_n)$ to obtain an appropriate concentration
(localization)  result:

\begin{corollary}[Concentration of eigenvalues] Let $M_n$ be a Wigner matrix obeying Condition \condo, and let $W_n := \frac{1}{\sqrt{n}} M_n$.  Assume that $M_n$ matches moments with GUE to fourth order off the diagonal and second order on the diagonal.  
Then for any $1 \leq i \leq n$, we have
$$ \P( n^{2/3} \min(i, n-i+1)^{1/3} |\lambda_i(W_n) - \gamma_i| \geq T ) \ll n^{O(1)} \exp(-cT^c)$$
for any $T>0$.

If we assume only three matching moments, then the above estimate still holds provided that we have the additional hypothesis
$$ \min(i, n+1-i) \geq T^{c'}$$
for some fixed $c'>0$ (where the constant $c$ above is allowed to depend on $c'$).
\end{corollary}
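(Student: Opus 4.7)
The approach is to transfer counting-function concentration from Theorems~\ref{main} and~\ref{main-2} to individual eigenvalues via the basic identity
\[
\{\lambda_i(W_n) \le a\} = \{N_{(-\infty,a]}(W_n) \ge i\}.
\]
Replacing $W_n$ with $-W_n$ (which preserves Condition~\condo{} and the matching-moment hypotheses) we may assume $i \le n/2$, so that $\min(i, n-i+1) = i$; set $\delta := T/(n^{2/3} i^{1/3})$. The goal is to bound $\P(|\lambda_i(W_n) - \gamma_i| \ge \delta)$ by $n^{O(1)} \exp(-c T^c)$.

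The key ingredient is a semicircle calibration. From \eqref{gammai} together with the square-root behaviour $\rho_{\sc}(y) \asymp (2+y)^{1/2}$ near $y=-2$, one deduces
\[
2 + \gamma_i \asymp (i/n)^{2/3}, \qquad \rho_{\sc}(\gamma_i) \asymp (i/n)^{1/3}.
\]
For $\delta \lesssim 2 + \gamma_i$ this gives $n\int_{\gamma_i}^{\gamma_i+\delta}\rho_{\sc}(y)\,dy \asymp n\rho_{\sc}(\gamma_i)\delta \asymp T$, while for $\delta \gtrsim 2 + \gamma_i$ direct integration of the square-root singularity yields $n\int_{\gamma_i}^{\gamma_i+\delta}\rho_{\sc}(y)\,dy \gtrsim n\delta^{3/2} \gtrsim T$. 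The same lower bound of order $T$ applies to $n\int_{\gamma_i-\delta}^{\gamma_i}\rho_{\sc}(y)\,dy$ whenever $\gamma_i - \delta \ge -2$.

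For the upper tail, the event $\lambda_i(W_n) > \gamma_i + \delta$ is equivalent to $N_{(-\infty,\gamma_i+\delta]}(W_n) \le i - 1$, which by the calibration forces a deviation of size $\gtrsim T$ in $|N_{(-\infty,\gamma_i+\delta]}(W_n) - n\int_{-\infty}^{\gamma_i+\delta}\rho_{\sc}(y)\,dy|$; Theorem~\ref{main} gives $\ll n^{O(1)}\exp(-cT^c)$. For the lower tail in the regime $\gamma_i - \delta \ge -2$ (which is forced when $T \lesssim i$), the symmetric argument using $\{N_{(-\infty,\gamma_i-\delta]}(W_n) \ge i\}$ and Theorem~\ref{main} delivers the same bound. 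In the remaining range $T \gtrsim i$ one has $\gamma_i - \delta < -2$, and the event $\lambda_i(W_n) < \gamma_i - \delta$ forces the smallest eigenvalue of $W_n$ to fall below $-2$ by an amount $\gtrsim \delta - (\gamma_i + 2)$; a short calculation shows the $n^{2/3}$-rescaled excess is of order $T/i^{1/3} \gtrsim T^{2/3}$, and Theorem~\ref{main-2} then closes the bound with a slightly smaller exponent that is absorbed into the constant $c$.

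For the three-moment version Theorem~\ref{main-2} is not available, but the hypothesis $\min(i, n-i+1) \ge T^{c'}$ handles the regime $\gamma_i - \delta < -2$ differently: the expected count $n\int_{-\infty}^{\gamma_i-\delta}\rho_{\sc}(y)\,dy$ vanishes, so the event $N_{(-\infty,\gamma_i-\delta]}(W_n) \ge i \ge T^{c'}$ is controlled directly by Theorem~\ref{main} applied with threshold $T^{c'}$, yielding $\ll n^{O(1)} \exp(-c T^{c c'})$ after redefining $c$. The main obstacle is verifying the semicircle calibration lower bound of order $T$ uniformly across the transition regime $\delta \sim 2 + \gamma_i$, where the square-root density behaves rather differently on the two sides; once that is in hand, the remaining case analysis is routine.
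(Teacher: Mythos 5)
Your proposal is correct and takes essentially the same approach as the paper: translate eigenvalue deviations into counting-function deviations via $\{\lambda_i \le a\} = \{N_{(-\infty,a]} \ge i\}$, apply Theorem~\ref{main} in the bulk and Theorem~\ref{main-2} at the edge, and use the hypothesis $\min(i,n+1-i) \geq T^{c'}$ to avoid the edge in the three-moment case; the paper phrases this as conditioning on a single good event (uniform counting-function and edge bounds via the union bound) followed by a deterministic ``elementary estimation'' of $\rho_\sc$, citing \cite[\S 5]{EYY2}, and your calibration is exactly that estimation spelled out. On the transition regime you flag: when $\gamma_i - \delta$ is just barely below $-2$, the rescaled excess $n^{2/3}(\delta - (2+\gamma_i))$ can indeed be $o(T^{2/3})$ so Theorem~\ref{main-2} alone is not enough, but in that sub-regime one has $T \asymp i$ and the event $\lambda_i < \gamma_i - \delta < -2$ already forces $N_{(-\infty,-2)} \geq i \gtrsim T$, so Theorem~\ref{main} applied to the interval $(-\infty,-2)$ closes the gap; with this observation the transition is handled and your argument is complete.
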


The second part of this corollary significantly improves \cite[Theorem 29]{TVlocal1}.  (As a matter of fact, the original proof 
of this theorem has a gap in it;  see \cite[Appendix A]{TVsurvey} for a further discussion.) 

\begin{proof} First assume four matching moments. By Theorems \ref{main}, \ref{main-2} and the union bound, we see that outside of an event of probability $n^{O(1)} \exp(-cT^c)$, we have
\begin{equation}\label{n1}
 N_I = n \int_I \rho_\sc(y)\ dy + O(T)
\end{equation}
for all intervals $I$, as well as the bounds
\begin{equation}\label{n2}
-2 - O(n^{-2/3} T) \leq \lambda_n(W_n) \leq \lambda_1(W_n) \leq 2 + O( n^{-2/3} T).
\end{equation}
Some elementary estimation of the semicircular density $\rho_\sc$ and its integrals $\int_I \rho_\sc(y)\ dy$ (cf. \cite[\S 5]{EYY2}) then gives
$$ \lambda_i(W_n) = \gamma_i + O( n^{-2/3} \min(i, n-i+1)^{-1/3} T )$$
for all $1 \leq i \leq n$.  The claim follows (possibly after adjusting $T$ by a multiplicative factor).

Now suppose we only have three matching moments.  Then by Theorem \ref{main} and the union bound, we may assume that
$$
 |N_I - n \int_I \rho_\sc(y)\ dy| < T^{c'}
$$
for all $I$. In particular (setting $I$ equal to $[2,+\infty)$ or $(-\infty,-2]$) this implies that $-2 \leq \lambda_i(W_n) \leq 2$ whenever $\min(i, n+1-i) \geq T^{c'}$.  One can then argue as before.
\end{proof}

\begin{remark}  The results in this paper also hold if one replaces the GUE ensemble by the GOE ensemble, in which case one considers real symmetric Wigner matrices instead of Hermitian Wigner matrices, with the off-diagonal $\xi_{ij}$ having mean zero, variance one, and third moment zero (if there are three matching moments) and fourth moment equal to $3$ (if there are four matching moments).  To do this, one needs to replace Theorem \ref{mevar} and Theorem \ref{conc-extreme} by their GOE counterparts.  The GOE version of Theorem \ref{mevar} was established by O'Rourke \cite{rourke}.
The GOE version of Theorem \ref{conc-extreme} follows from the results in \cite{LR}.  In principle, one might be able to use other ensembles (such as the gaussian divisible matrices \cite{Joh1}) to match moments with, which would allow one to remove the moment conditions almost entirely.  We will not pursue these matters here.
\end{remark}

We are indebted to the anonymous referees for several suggestions and corrections.

\section{Reduction to the Stieltjes transform}\label{stieltjes-sec}

We now begin the proof of Theorem \ref{main}.  The first step is to replace the counting function $N_I = N_I(W_n)$ with the \emph{Stieltjes transform} $s_{W_n}$, defined by the formula
\begin{equation}\label{swn-def}
s_{W_n}(z) := \frac{1}{n} \tr(W_n - z)^{-1} = \frac{1}{n} \sum_{i=1}^n \frac{1}{\lambda_i(W_n) - z}
\end{equation}
for any complex number $z$ with positive imaginary part.  We can express this Stieltjes transform as a Riemann-Stieltjes integral
\begin{equation}\label{sneeze}
 s_{W_n}(z) = \frac{1}{n} \int_\R \frac{1}{x-z}\ dN_{(-\infty,x)}.
\end{equation}
which gives a clear connection between the Stieltjes transform and the counting function; in the converse direction, we have the identity
\begin{equation}\label{bless}
\frac{\pi}{2} - \frac{\pi}{n} N_{(-\infty,E)} =  \Re \int_0^\infty s_{W_n}(E+\sqrt{-1}\eta)\ d\eta
\end{equation}
whenever $E$ is not an eigenvalue of $W_n$, showing that (in principle at least) we can reconstruct the eigenvalue counting function from the Stieltjes transform.

Using the heuristic $dN_{(-\infty,x)} \approx n \rho_\sc(x)\ dx$ from \eqref{niwn}, we thus expect from \eqref{sneeze} to have $s_{W_n} \approx s_\sc$, where
$$ s_\sc(z) := \int_\R \frac{1}{x-z} \rho_\sc(x)\ dx.$$
As is well known (see e.g. \cite{bai}), $s_\sc$ can be evaluated explicitly via contour integration\footnote{For instance, one can observe that $\frac{1}{\pi} \Im s_\sc(x \pm \sqrt{-1}\eps)$ converges to $\pm \rho_{sc}(x)$ as $\eps \to 0^+$, and then apply the Cauchy integral formula to $s_\sc$ around the slit $[-2,2]$.}\begin{equation}\label{explicit}
 s_\sc(z) = \frac{1}{2} (-z + \sqrt{z^2-4}),
\end{equation}
where $\sqrt{z^2-4}$ is the branch of the square root that is asymptotic to $z$ at infinity.  In particular, $s_\sc$ exactly obeys the \emph{self-consistent equation}
\begin{equation}\label{sce}
s_\sc(z) = -\frac{1}{s_\sc(z) + z}
\end{equation}

In the case of GUE, we may easily formalize this heuristic with the assistance of Corollary \ref{conc}:

\begin{proposition}[Concentration for GUE]\label{gack} Let $M_n$ be drawn from GUE, and $W_n := \frac{1}{\sqrt{n}} M_n$.  Then for any $T > 0$ and any complex number $z = E + \sqrt{-1}\eta$ with $\eta>0$, one has
$$ \P\left( |s_{W_n}(z) - s_\sc(z)| \geq \frac{T}{n\eta} \right) \ll n^{O(1)} \exp( - c T ).$$
\end{proposition}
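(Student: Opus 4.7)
The plan is to reduce this proposition to Corollary~\ref{conc} via integration by parts, converting the Stieltjes transform into a weighted integral against the eigenvalue counting function. Define $F_\sc(x) := \int_{-\infty}^x \rho_\sc(y)\,dy$ and the discrepancy
\begin{equation*}
D(x) := N_{(-\infty,x)}(W_n) - n F_\sc(x).
\end{equation*}
Applying integration by parts to \eqref{sneeze} and to the analogous representation $s_\sc(z) = \int_\R \frac{1}{x-z}\,dF_\sc(x)$ yields the exact identity
\begin{equation*}
s_{W_n}(z) - s_\sc(z) = \frac{1}{n}\int_\R \frac{D(x)}{(x-z)^2}\,dx,
\end{equation*}
with vanishing boundary terms: for $x$ exceeding both $\|W_n\|$ and $2$, $D(x) = n - n = 0$, and symmetrically at $-\infty$.

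Next, I would establish a uniform bound $\sup_x |D(x)| \lesssim T$ outside an exceptional event of probability $n^{O(1)}\exp(-cT)$. First, by Theorem~\ref{conc-extreme}, $\|W_n\| \leq 3$ outside an event of probability $\exp(-cT^{3/2})$, which forces $D$ to be supported in $[-3,3]$ on the good event. Next, apply Corollary~\ref{conc} to each of the $n^{O(1)}$ intervals $(-\infty, x_j]$, where the $x_j$ form a grid in $[-3,3]$ of spacing $n^{-10}$; by a union bound, $|D(x_j)| \leq T$ simultaneously for all $j$ outside an event of probability $n^{O(1)}\exp(-cT)$. For arbitrary $x \in [-3,3]$, monotonicity of $N_{(-\infty,\cdot)}$ together with the pointwise bounds $|D(x_{j\pm 1})| \leq T$ gives that the number of eigenvalues in any interval of length $n^{-10}$ is at most $2T + O(n^{-9})$; combined with $n|F_\sc(x) - F_\sc(x_j)| = O(n^{-9})$, this extends the pointwise bound to $|D(x)| \leq 3T + O(1) \lesssim T$ uniformly in $x$.

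Finally, on this good event I would estimate the integral directly:
\begin{equation*}
|s_{W_n}(z) - s_\sc(z)| \leq \frac{O(T)}{n}\int_{-3}^{3} \frac{dx}{(x-E)^2 + \eta^2} \leq \frac{O(T)}{n}\int_\R \frac{du}{u^2 + \eta^2} = O\!\left(\frac{T}{n\eta}\right),
\end{equation*}
which, after absorbing the implicit constant into $T$, yields the claim. For $T \lesssim \log n$ the right-hand side $n^{O(1)}\exp(-cT)$ is trivially $\geq 1$, so the proposition is vacuous in that regime; for $T \gg \log n$ the hypothesis of Corollary~\ref{conc} is satisfied at every grid point. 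The only mild obstacle is bookkeeping in the uniform bound on $D$, but the exponential concentration in Corollary~\ref{conc} absorbs the polynomial union-bound loss without difficulty.
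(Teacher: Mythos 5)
Your proof is correct and follows essentially the same route as the paper's: integrate by parts to express $s_{W_n}(z) - s_\sc(z)$ as $\frac{1}{n}\int_\R \frac{D(x)}{(x-z)^2}\,dx$, control $\sup_x |D(x)|$ via Corollary~\ref{conc} plus a union bound over a polynomial grid, and evaluate the resulting integral as $O(T/(n\eta))$. The only cosmetic difference is that you invoke Theorem~\ref{conc-extreme} to confine $D$ to $[-3,3]$, whereas the paper extracts the same tail control directly from Corollary~\ref{conc} applied to intervals reaching past $\pm 3$; either works.
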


\begin{proof}  We may assume that $T \gg \log n$, as the claim is trivial otherwise.
Let $T_1 \gg \log n$ be chosen later.  From Corollary \ref{conc} and the union bound, we see that with probability $1 - O( n^{O(1)} \exp(-cT_1) )$, one has
$$ \left|N_I(W_n) - n \int_I \rho_\sc(y)\ dy\right| \ll T_1$$
for all intervals $I$ in $[-3,3]$ whose endpoints are multiples of $n^{-100}$, and hence for all intervals $I$.  In particular,
$$ N_{(-\infty,x)} = n \int_{-\infty}^x \rho_\sc(y)\ dy + O(T_1)$$
for all $x$.  On the other hand, from \eqref{sneeze} and integration by parts, one has
$$ s_{W_n}(z) = \frac{1}{n} \int_\R \frac{1}{(x-z)^2} N_{(-\infty,x)}\ dx.$$
A similar integration by parts gives
$$ s_\sc(z) = \int_\R \frac{1}{(x-z)^2} \left(\int_{-\infty}^x \rho_\sc(y)\ dy\right)\ dx,$$
and thus by the triangle inequality
$$ s_{W_n}(z) = s_\sc(z) + O( \frac{1}{n} \int_\R \frac{1}{|x-z|^2} T_1\ dx ).$$
The error term on the right-hand side evaluates to $O(\frac{T_1}{n\eta})$.  The claim then follows by choosing $T_1$ to be a small multiple of $T$.
\end{proof}

We will use this proposition to obtain a similar concentration result for Wigner matrices:

\begin{theorem}[Concentration for Wigner]\label{wigo} Let $M_n$ be a Wigner matrix obeying Condition \condo, and let $W_n := \frac{1}{\sqrt{n}} M_n$.  Assume that $M_n$ matches moments with GUE to third order off the diagonal.  Then for any $T > 0$ and any complex number $z = E + \sqrt{-1}\eta$ with $E \in [-3,3]$ and $0 < \eta \ll n^{100}$, one has
$$ \P( |s_{W_n}(z) - s_\sc(z)| \geq \frac{T}{n\eta} ) \ll n^{O(1)} ( \exp( - c T^c ) + \exp( - c (n\eta)^c ) ).$$
\end{theorem}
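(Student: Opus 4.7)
The strategy is a Lindeberg exchange: interpolate between $W_n$ and a GUE matrix $W_n' = \frac{1}{\sqrt n} M_n'$ (to which Proposition \ref{gack} directly applies) by replacing the entries of $M_n$ with those of $M_n'$ one at a time, swapping off-diagonal pairs $\xi_{ij},\xi_{ji}$ together and diagonal entries individually so that hermiticity is preserved throughout. Because the real and imaginary parts of $\xi_{ij}$ are independent, one may in fact swap real and imaginary parts separately, which is convenient. The goal is to compare the high moment $\E|s_{W_n}(z) - s_\sc(z)|^p$ of the Wigner Stieltjes transform with its GUE counterpart and conclude by Markov's inequality.

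The first step is to secure a priori bounds. A relatively crude analysis of the self-consistent equation \eqref{sce}, in the spirit of \cite{EYY2}, should show that outside an event of probability $n^{O(1)} \exp(-c(n\eta)^c)$, every intermediate matrix $W^{(k)}$ along the Lindeberg interpolation has resolvent entries $|((W^{(k)}-z)^{-1})_{ab}| \ll \log^{O(1)} n$ and $|s_{W^{(k)}}(z) - s_\sc(z)| \ll (n\eta)^{-c}$. The factor $\exp(-c(n\eta)^c)$ in the statement of the theorem is precisely the failure probability for this step, after taking a union bound over the $O(n^2)$ intermediate matrices and over a discretisation of $z$.

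The second step is a Taylor expansion along each individual swap. Fixing all other entries, view $s_{W_n}(z)$ as a smooth function $\Phi(\xi)$ of the single real variable being replaced. Iterating the resolvent identity shows that each derivative $\Phi^{(k)}(0)$ is a sum of $O_k(1)$ monomials in the entries of the resolvent $G(z)$ lying in the two rows and columns indexed by the swapped pair, weighted by a factor of $n^{-(k+1)/2}$. On the good event the $k$-th derivative is therefore $O(n^{-(k+1)/2} \log^{O(1)} n)$. Expanding to fourth order and taking expectations, the hypothesis of three matching moments off the diagonal ensures that the orders $k \le 3$ coincide for $\xi$ and its GUE replacement; only the $k = 4$ remainder survives, and Condition \condo bounds the fourth moment so that each swap produces a change of order $n^{-5/2} \log^{O(1)} n$ in $\E F(s_{W_n}(z))$ for a suitable smooth test function $F$. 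Applying this to a mollified version of $s \mapsto |s - s_\sc(z)|^p$ and summing across all $O(n^2)$ swaps yields
$$\E |s_{W_n}(z) - s_\sc(z)|^p \le \E |s_{W_n'}(z) - s_\sc(z)|^p + (\text{error depending on } p).$$

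The third step inserts the GUE bound from Proposition \ref{gack}, giving $\E |s_{W_n'}(z) - s_\sc(z)|^p \ll (Cp/(n\eta))^p$ up to $n^{O(1)}$ factors; optimising $p$ in Markov's inequality $\P(|s_{W_n}(z) - s_\sc(z)| \ge T/(n\eta)) \le (T/(n\eta))^{-p}\, \E|\cdots|^p$ with $p \asymp T^c$ produces the stretched exponential $\exp(-cT^c)$ factor. The main obstacle I anticipate is maintaining the derivative estimates in step 2 \emph{uniformly} along the entire interpolation, since any one of the $O(n^2)$ intermediate matrices could in principle violate the a priori resolvent bound; this forces the crude self-consistent equation analysis in step 1 to be quantitative enough to absorb a large union bound, which is also the source of the $n^{O(1)}$ prefactor in the final conclusion.
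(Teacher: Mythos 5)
Your overall architecture (Lindeberg exchange between $W_n$ and a GUE $W'_n$, comparison of a high moment, then Markov with $p\asymp T^c$) is the same as the paper's, and your step 1 (a priori resolvent bounds from a crude self-consistent-equation analysis, union-bounded over the interpolation and a mesh of $z$) matches the role of the paper's Corollaries \ref{lsl} and \ref{rb}. But the heart of the matter --- your step 2 --- has a genuine gap that the paper's proof is specifically designed to close. You propose to push a mollified $F_p(s)=|s-s_\sc(z)|^p$ through the swap and record the per-swap change as $O(n^{-5/2}\log^{O(1)}n)$, then wave at ``an error depending on $p$.'' That dependence is the whole difficulty: the fourth-order Taylor remainder of $F_p\circ\Phi$ carries a factor $F_p^{(4)}(s_0)\sim p^4|s_0-s_\sc|^{p-4}$, so after summing over $O(n^2)$ swaps one faces a term like $p^4\,\E|s-s_\sc|^{p-4}$, which for $p$ large is not dominated by the GUE target $\E|s-s_\sc|^p\ll (Cp/(n\eta))^p$ without further structure. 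Your derivative estimate $\Phi^{(j)}(0)=O(n^{-(j+1)/2})$ is also not the right shape: the paper's Proposition \ref{stabres} gives $\Phi^{(j)}(0)\ll n^{-j/2}\|R_0\|_{(\infty,1)}^j\min(\|R_0\|_{(\infty,1)},\tfrac{1}{n\eta})$, and that extra $\min(\cdot,\tfrac{1}{n\eta})$ is exactly what makes the normalization $A(W_n):=n\eta\,(s_{W_n}(z)-s_\sc(z))$ natural and gives $O(1)$ Taylor coefficients for $A$.

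What the paper does instead --- and what your proposal omits --- is twofold. First, it works directly with the even moment $\E A(W_n)^k$ rather than $\E F_p(s_{W_n})$ for a smooth mollifier; the text flags precisely this as the main novelty. Second, and crucially, within each swap it conditions on $M_n^0$ and splits on whether $|A_0|\leq k$ or $|A_0|>k$. In the first case $A(W_n)=O(k)$ whenever $\xi=o(\sqrt n)$, contributing the harmless $O(k)^k$ term. In the second case one writes $A(W_n)=A_0(1+\tfrac{1}{k}\sum b_j(\xi/\sqrt n)^j+\cdots)$ with $b_j=O(1)$, so that expanding $(1+\tfrac{x}{k})^k$ to order $m$ produces coefficients $d_j=O(1)$ \emph{uniformly in $k$}; this is what turns the per-swap correction into a multiplicative factor $1+O(n^{-(m+1)/2})$, which compounds to $O(1)$ over $n^2$ swaps (Proposition \ref{stab-mom}). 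Without this case-split and normalization your additive ``error depending on $p$'' does not telescope, and the argument as written would not deliver the stated bound for $\eta$ away from $1/n$. I'd recommend replacing step 2 with the $A(W_n)^k$ bookkeeping and the $|A_0|\gtrless k$ dichotomy, and quoting Proposition \ref{stabres} for the resolvent Taylor expansion with the correct $\min(\|R_0\|_{(\infty,1)},\tfrac{1}{n\eta})$ factor.
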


We prove this theorem in later sections.  Let us assume it for now, and use it to establish Theorem \ref{main}.  The basic idea (which is standard in the Stieltjes transform approach to the local semicircle law) is to use a truncated form of \eqref{bless}. Let $M_n, W_n, T, K$ be as in the above theorem.  By the triangle inequality, we may take $I = (-\infty,E)$ for some real number $E$; from the support of $\rho_\sc$, we may assume that $E \in [-2,2]$.  We may also take $T \gg \log^{100} n$ (say), as the claim is trivial otherwise.  

Let $T_1 \gg T/\log n \gg \log^{99} n$ be a quantity to be chosen later, and set $\eta_0 := T_1 / n$.  Applying Theorem \ref{wigo} and the union bound, we see that outside of an event of probability at most
\begin{equation}\label{noo}
 n^{O(1)} \exp(-c T_1^{-c}),
\end{equation}
one has
\begin{equation}\label{sawn}
|s_{W_n}(E + \sqrt{-1} \eta) - s_\sc(E + \sqrt{-1} \eta)| \ll \frac{T_1}{n\eta}
\end{equation}
for all values of $\eta$ between $\eta_0$ and $n^{100}$ which are integer multiples of $n^{-1000}$.  On the other hand, in this range one easily verifies that the functions $\eta \mapsto s_{W_n}(E+\sqrt{-1}\eta)$ and $\eta \mapsto s_\sc(E+\sqrt{-1}\eta)$ are Lipschitz with Lipschitz norm at most $O(n^{200})$ (say).  As a consequence, we conclude (after conditioning outside of the above exceptional event) that \eqref{sawn} holds for \emph{all} $\eta$ between $\eta_0$ and $n^{100}$.  

By conditioning on another event of probability at most \eqref{noo}, we may assume that all entries of $M_n$ are of size at most $O(n)$ (say).  Among other things, this implies  that all eigenvalues $\lambda_i(W_n)$ are (very crudely) of size at most $O(n^{20})$.

Since $\eta \geq \eta_0 = T_1/n$, we conclude from \eqref{sawn} and \eqref{explicit} that
$$ |s_{W_n}(E + \sqrt{-1} \eta)| \ll 1.$$
On the other hand, from \eqref{swn-def} one has
$$ \Im s_{W_n}(E + \sqrt{-1} \eta) = \frac{1}{n} \sum_{i=1}^n \frac{\eta}{|\lambda_i(W_n)-E|^2 + \eta^2}$$
and in particular
$$ \Im s_{W_n}(E + \sqrt{-1} \eta) \gg \frac{1}{n\eta} N_{[E-\eta,E+\eta]}.$$
We conclude that\footnote{One could also have used Proposition \ref{crudo} at this juncture.}
\begin{equation}\label{neeta}
 N_{[E-\eta,E+\eta]} \ll n \eta
\end{equation}
for all $\eta \geq \eta_0$ (note that this claim is trivial for $\eta \geq n^{100}$).  

Next, if we integrate \eqref{sawn} and use the triangle inequality, we observe that
\begin{equation}\label{oi}
\Re \int_{\eta_0}^{n^{100}} s_{W_n}(E + \sqrt{-1} \eta)\ d\eta =
\Re \int_{\eta_0}^{n^{100}} s_\sc(E + \sqrt{-1} \eta)\ d\eta + O\left( \frac{T_1 \log n}{n} \right).
\end{equation}
Let us now evaluate the left-hand side.  From the definition of the Stieltjes transform, we may rewrite it as
$$ \frac{1}{n} \sum_{i=1}^n \Arg( E + \sqrt{-1} \eta_0 - \lambda_i(W_n) ) - \Arg( E + \sqrt{-1} n^{100} - \lambda_i(W_n) ),$$
where $\Arg$ is the standard branch of the argument on the upper half-plane.

Since $E \in [-2,2]$ and $\lambda_i(W_n) = O(n^{20})$, we have
$$ \Arg( E + \sqrt{-1} n^{100} - \lambda_i(W_n) ) = \frac{\pi}{2} + O( n^{-50} )$$
(say).  Also, from elementary trigonometry one has
$$ \Arg( E + \sqrt{-1} \eta_0 - \lambda_i(W_n) ) = \pi 1_{\lambda_i(W_n) \geq E} + O\left( \frac{\eta_0}{|\lambda_i(W_n)-E| + \eta_0} \right).$$
We may therefore write the left-hand side of \eqref{oi} as
$$  \frac{\pi}{2} - \frac{1}{n} \pi N_{(-\infty,E)} + O\left( \frac{1}{n} \sum_{i=1}^n \frac{\eta_0}{|\lambda_i(W_n)-E| + \eta_0} \right) + O( n^{-50} )$$
(compare with \eqref{bless}).
On the other hand, from \eqref{neeta} and dyadic decomposition (recalling that $\lambda_i(W_n) = O(n^{20})$) one has
$$ \frac{1}{n} \sum_{i=1}^n \frac{\eta_0}{|\lambda_i(W_n)-E| + \eta_0} = O( \eta_0 \log n )$$
and thus
$$
\Re \int_{\eta_0}^{n^{100}} s_{W_n}(E + \sqrt{-1} \eta)\ d\eta =
\frac{\pi}{2} - \frac{1}{n} \pi N_{(-\infty,E)} + O\left( \frac{T_1 \log n}{n} \right).$$
A similar argument gives
$$
\Re \int_{\eta_0}^{n^{100}} s_\sc(E + \sqrt{-1} \eta)\ d\eta =
\frac{\pi}{2} - \pi \int_{-\infty}^E \rho_\sc(y)\ dy + O\left( \frac{T_1 \log n}{n} \right).$$
From \eqref{oi} we thus conclude that
$$ N_{(-\infty,E)} = n\int_{-\infty}^E \rho_\sc(y)\ dy + O( T_1 \log n ).$$
Choosing $T_1$ to be a small multiple of $T/\log n$ (and bounding $T_1^c$ from below by $T^{c'} - O(\log n)$ for some sufficiently small $c'>0$), we obtain Theorem \ref{main} as desired.

It remains to deduce Theorem \ref{wigo} from Proposition \ref{gack}.  This will be the objective of the next few sections.

\section{The moment method, and the Lindeberg strategy}\label{moment-sec}

Given a matrix $W_n = \frac{1}{\sqrt{n}} M_n$ and a complex number $z = E + \sqrt{-1} \eta$, define the quantity $A(W_n) = A(W_n,z)$ by the formula
$$ A(W_n) := n\eta (s_{W_n}(z) - s_\sc(z)).$$
This quantity describes the normalised deviation of the Stieltjes transform of $W_n$ from the semicircular law at $z$.  In this notation, Proposition \ref{gack} becomes the assertion that
\begin{equation}\label{awt-1}
 \P( |A(W_n)| \geq T ) \ll n^{O(1)} \exp( - c T )
\end{equation}
whenever $T>0$, $E \in \R$, and $\eta > 0$, when $M_n$ is drawn from GUE.  Similarly, Theorem \ref{wigo} becomes the assertion that
\begin{equation}\label{awt-2}
\P( |A(W_n)| \geq T ) \ll n^{O(1)} ( \exp( - c T^c ) + \exp( - c (n\eta)^c ) )
\end{equation}
whenever $T>0$, $E \in [-3,3]$, and $0 < \eta \ll n^{100}$, when $M_n$ is drawn from a Wigner matrix obeying Condition {\condo}, and with $\Re \xi_{ij}$ and $\Im \xi_{ij}$ having variance $1/2$ and third moment zero for $1 \leq i < j \leq n$.

To deduce \eqref{awt-2} from \eqref{awt-1} we will use the moment method combined with the Lindeberg exchange strategy; more specifically, we will show that a high moment $\E A(W_n)^k$ for some large even number $k$ (which one should think of, in practice, as comparable to $T$) is stable under the operation of replacing (the real or imaginary part of) one entry of $M_n$ (and its transpose) with another entry with a number of matching moments.  The Lindeberg exchange strategy is by now a standard tool in establishing universality properties for Wigner matrices \cite{TVlocal1}, \cite{EYY}, \cite{EYY2}; the main novelty here\footnote{Very recently \cite{ky0}, a similar application of the Lindeberg exchange strategy to a high moment of a spectral statistic was used to establish some related concentration results.  We thank Antti Knowles for bringing this preprint to our attention.}  is the application of that strategy to a high moment $\E A(W_n)^k$ (as opposed to a quantity such as $\E G( A(W_n) )$ for some smooth test function $G$).

Let us now make the strategy more precise.  Let us call two Wigner matrices $M_n, M'_n$ \emph{real-adjacent}, or \emph{adjacent} for short, if their respective atom variables $\xi_{ij}, \xi'_{ij}$ are equal except for a single choice of $(i,j)=(a,b)$ and its transpose $(i,j)=(b,a)$, and such that $\xi_{ab}, \xi'_{ab}$ either have identical real parts, or identical imaginary parts.  Thus, a Wigner matrix $M'_n$ adjacent to $M_n$ is formed by changing the real or imaginary part of a single entry of $M_n$ and its adjoint, leaving the other components of $M_n$ unchanged.   The main technical step is then to establish the following proposition.

\begin{proposition}[Stability of moments]\label{stab-mom}  Let $M_n, M'_n$ be two adjacent Wigner matrices obeying Condition {\condo}, whose moments match to order $m$ for some fixed $m = O(1)$.  Let $z = E + \sqrt{-1} \eta$ for some $E \in [-3,3]$ and $0 < \eta \ll n^{100}$, and set $W_n := \frac{1}{\sqrt{n}} M_n$ and $W'_n := \frac{1}{\sqrt{n}} M'_n$.  Then for any even integer $k \geq \log n$, one has
\begin{equation}\label{noodle}
 \E A(W_n)^k \leq \left(1 + O\left( \frac{1}{n^{(m+1)/2}} \right)\right) \E A(W'_n)^k + O(k)^k + O( n^{O(k)} \exp(-(n\eta)^c) ).
\end{equation}
\end{proposition}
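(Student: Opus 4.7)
The plan is to carry out the Lindeberg exchange directly at the level of the $k$-th moment $\E A(W_n)^k$, via a resolvent expansion in the swapped scalar variable. Let $(a,b)$ be the coordinate at which $M_n$ and $M'_n$ differ, and write $W_n = W_n^{(0)} + \frac{\xi}{\sqrt n} V$ and $W'_n = W_n^{(0)} + \frac{\xi'}{\sqrt n} V$, where $W_n^{(0)}$ is obtained from $W_n$ by zeroing out the $(a,b)$ and $(b,a)$ entries, $V$ is a deterministic symmetric matrix of rank $\leq 2$ supported on $\{a,b\}$, and $\xi,\xi'$ are the real scalars being swapped, independent of $W_n^{(0)}$ and sharing their first $m$ moments. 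Setting $G := (W_n^{(0)}-z)^{-1}$, the goal is to expand $A(W_n)^{k}$ as a polynomial in $\xi$ whose coefficients depend only on $W_n^{(0)}$, and then kill all monomials of $\xi$-degree at most $m$ via moment matching.

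First I would isolate a good event $\mathcal{G}$ on which (i) $|\xi|,|\xi'|\leq n^{\delta}$ for small $\delta>0$, and (ii) the four entries $G_{pq}$ with $p,q\in\{a,b\}$ satisfy $|G_{pq}|\leq L$ for some $L=n^{o(1)}$. Part (i) holds outside an event of probability $\ll \exp(-(n\eta)^{c})$ by Condition \condo, while (ii) is obtained by a coarse Schur-complement / Ward-identity argument relating $G_{aa}$ to the Stieltjes transform of the $(n-1)\times(n-1)$ minor, with failure probability $\leq\exp(-(n\eta)^{c})$. On $\mathcal{G}^{c}$ the trivial bound $|A(W_n)|\leq Cn$ (from $|s_{W_n}(z)|\leq 1/\eta$) gives $\E|A(W_n)|^{k}\mathbf{1}_{\mathcal{G}^{c}}\leq n^{O(k)}\exp(-(n\eta)^{c})$, producing the third error term in \eqref{noodle}.

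On $\mathcal{G}$ the Neumann series $(W_n-z)^{-1}=\sum_{j\geq 0}(-\xi/\sqrt n)^{j}G(VG)^{j}$ converges (since $|\xi|L/\sqrt n = o(1)$ for $\delta$ small), and truncating at $J=Ck$ yields
$$A(W_n) = A(W_n^{(0)}) + \eta\sum_{j=1}^{J}(-\xi)^{j}n^{-j/2}T_{j} + R_{J}(\xi),$$
with $T_{j} := \tr(G(VG)^{j})$ depending only on $W_n^{(0)}$ and satisfying $|T_{j}|\leq C(2L)^{j+1}$ on $\mathcal{G}$ (because $(VG)^{j}$ has rank $\leq 2$, so its trace against $G$ reduces to a sum of at most $2^{j+1}$ products of entries $G_{pq}$, $p,q\in\{a,b\}$), and for $C$ large the remainder $R_{J}$ is smaller than $n^{-100k}$ on $\mathcal{G}$. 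Expanding $A(W_n)^{k}$ via the multinomial theorem and taking the conditional expectation over $\xi$ with $W_n^{(0)}$ frozen, we obtain a sum $\sum_{(j_1,\ldots,j_k)}\prod_{i}b_{j_i}\cdot\E\xi^{\sum_i j_i}$, where $b_0=A(W_n^{(0)})$ and $b_j=\eta(-1)^{j}n^{-j/2}T_{j}$ for $j\geq 1$. Moment matching eliminates all multi-indices with $\sum_i j_i \leq m$, so only multi-indices with $\sum_i j_i \geq m+1$ contribute to $\E A(W_n)^{k}-\E A(W'_n)^{k}$.

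Each surviving multi-index carries a factor $n^{-(\sum_i j_i)/2}\leq n^{-(m+1)/2}$, and the remaining factors are bounded on $\mathcal{G}$ by $C^{k}|A(W_n^{(0)})|^{k-\ell}$, where $\ell$ is the number of nonzero $j_i$. A Young-type split $|A(W_n^{(0)})|^{k-\ell}\leq |A(W_n^{(0)})|^{k}+1$, combined with the combinatorial count $J^{k}=O(k)^{k}$ of multi-indices, gives
$$|\E A(W_n)^{k}-\E A(W'_n)^{k}| \leq n^{-(m+1)/2}\bigl(C\,\E|A(W_n^{(0)})|^{k}+O(k)^{k}\bigr) + O(n^{O(k)}\exp(-(n\eta)^{c})),$$
and a symmetric application of the same expansion (comparing $W_n^{(0)}$ to $W'_n$, which differ only by an $\frac{\xi'}{\sqrt n}V$ rank-$\leq 2$ perturbation) yields $\E|A(W_n^{(0)})|^{k}\leq \E A(W'_n)^{k}+O(k)^{k}$, producing \eqref{noodle}. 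The main technical obstacle is step (ii): establishing the a priori bound $|G_{pq}|\leq n^{o(1)}$ with failure probability only $\exp(-(n\eta)^{c})$, as the fine-scale local semicircle law is not yet available at this point in the paper; this is to be handled by a coarse Schur-complement argument using only a weak concentration bound on the Stieltjes transform of the $(n-1)\times(n-1)$ minor of $W_n^{(0)}$, together with Condition \condo to control the denominators appearing in that estimate.
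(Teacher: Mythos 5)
Your overall strategy---Lindeberg swapping carried out at the level of the $k$-th moment, resolvent expansion in the swapped scalar $\xi$, isolating a good event on which the resolvent entries are $O(1)$, and moment matching---is the same as the paper's, and the preliminary input you flag as the main obstacle (a bound $|G_{pq}| = O(1)$ with failure probability $\exp(-(n\eta)^c)$) is exactly what the paper provides via Corollary~\ref{rb}. So the architecture matches. But there is a genuine gap in the way you expand $A(W_n)^k$: you go directly via the multinomial theorem and try to control the sum over multi-indices by a crude count $O(k)^k$ together with the split $|A_0|^{k-\ell}\le|A_0|^k+1$. This does not close. The paper instead dichotomizes on the size of $A_0 := A(W_n^{(0)})$. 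If $|A_0|\le k$, then $|A(W_n)|=O(k)$ on the good event and this case is swallowed wholesale by the $O(k)^k$ error term, with no moment matching needed. If $|A_0|>k$, the paper factors out $A_0$, writing $A(W_n)=A_0\bigl(1+\tfrac{1}{k}(\,\cdot\,)\bigr)$ where the parenthetical is an $O(1)$-coefficient polynomial in $\xi/\sqrt{n}$ plus a remainder; raising this to the $k$-th power via $(1+x/k)^k$ produces uniformly bounded coefficients $d_j=O(1)$ in the expansion $A(W_n)^k = A_0^k\bigl(1+\sum_{j\le m}d_j(\xi/\sqrt{n})^j + O((\xi/\sqrt{n})^{m+1})\bigr)$, and the matched moments then cancel cleanly to leave a relative error of size $O(n^{-(m+1)/2})$ with an absolute, $k$-independent implied constant.

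Without this dichotomy, the coefficients of the multinomial expansion scale with $k$. Concretely, for each fixed total degree $J_{tot}=m+1$, the $\ell=1$ multi-indices (a single nonzero $j_i$) alone number $k$, so the coefficient multiplying $A_0^{k-1}n^{-(m+1)/2}$ in $\E A(W_n)^k-\E A(W'_n)^k$ is $\Theta(k)$, not $O(1)$. Comparing $k\,A_0^{k-1}$ against $A_0^{k}$ already requires $|A_0|\gtrsim k$, which is precisely the case split you omit. Your claimed bound $n^{-(m+1)/2}\bigl(C\,\E|A(W_n^{(0)})|^k+O(k)^k\bigr)$ with $k$-independent $C$ is therefore not justified by the crude multi-index count; the factors of $k$ (and worse, the $C^k$ you introduce when bounding the products $\prod|b_{j_i}|$) would accumulate over the $n^2$ swaps to produce a coefficient like $e^{O(k)}$ or larger rather than $O(1)$. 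You should also take care that moment matching applies to the \emph{unconditional} moments of $\xi,\xi'$, while your expansion is only valid on the good event; the paper reconciles this by taking the full conditional expectation in $\xi$ and absorbing the tail $|\xi|\gg\sqrt{n}$ into the $n^{O(k)}\exp(-(n\eta)^c)$ error, which you should do explicitly rather than conditioning on $\mathcal{G}$ first.
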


Let us assume this proposition for now and establish Theorem \ref{wigo}.  Let $n, M_n, W_n, E, \eta, z, T$ be as in that theorem.  We may assume that $T \geq \log^{C_0} n$ (say) for some sufficiently large absolute constant $C_0$, as the claim is trivial otherwise; we may also assume that $T \leq \eta n$, since the claim follows from existing local semicircle laws (in particular, Corollary \ref{lsl}).  In particular, we may now assume that $T \leq n^{O(1)}$ and $\eta \geq \log^{C_0} n/n$.  Our task is now to show that
\begin{equation}\label{pawn}
 \P( |A(W_n)| \geq T ) \ll n^{O(1)} \exp( - c T^c ).
\end{equation}

On the other hand, if $M'_n$ is drawn from GUE and $W'_n := \frac{1}{\sqrt{n}} M'_n$, then from Proposition \ref{gack} one has
$$ \P( |A(W'_n)| \geq T ) \ll n^{O(1)} \exp( - c T )$$
for all $T>0$.  In particular, for any $k \geq \log n$, one has
\begin{equation}\label{spelunk}
\begin{split}
\E |A(W'_n)|^k &= \int_0^\infty \P( |A(W'_n)| \geq T ) k T^{k-1}\ dT \\
&\ll k n^{O(1)} \int_0^\infty e^{-cT} T^{k-1}\ dT \\
&\ll O(1)^k n^{O(1)} k! \\
&\ll O(k)^k
\end{split}
\end{equation}

We can replace $M'_n$ with $M_n$ in a sequence of $n^2$ exchanges from one Wigner matrix to a real-adjacent one; $n^2-n$ of these exchanges arise by swapping the real or imaginary part of an off-diagonal entry $\xi_{ij}$ of $M'_n$ (and its transpose $\xi_{ji}$) with the corresponding component of $M_n$, and $n$ of these exchanges arise by swapping a diagonal entry $\xi_{ii}$ of $M'_n$ with the corresponding entry of $M_n$.  We perform these exchanges in an arbitrary order.  By hypothesis, for the $n^2-n$ off-diagonal exchanges one has matching moments to order $m=3$, while for the diagonal exchanges one has matching moments to order $m=1$.  Let $M_n = M^0_n, M^1_n, \ldots, M^{n^2}_n = M'_n$ denote the sequence of exchanges from $M_n$ to $M^{n^2}_n$, and let $W^0_n,\ldots,W^{n^2}_n$ be the associated rescaled Wigner matrices.  By Proposition \ref{stab-mom} one has
$$ \E A(W^a_n)^k \leq (1 + O(\frac{1}{n^{(m_a+1)/2}} \E A(W^{a+1}_n)^k + O(k)^k + O( n^{O(k)} \exp(-(n\eta)^c) ),$$
for $0 \leq a < n^2$, where $m_a$ is equal to $3$ for $n^2-n$ choices of $a$ and equal to $1$ for $n$ choices of $a$.  Concatenating these bounds, we conclude that for any $k \geq \log n$ one has
$$ \E A(W_n)^k \leq O(1) \E A(W'_n)^k + O(n^2) O(k)^k + O( n^{O(k)} \exp(-(n\eta)^c) ).$$
In particular, from \eqref{spelunk} one has
$$ \E A(W_n)^k \ll O(k)^k + O( n^{O(k)} \exp(-(n\eta)^c) )$$
and hence by Markov's inequality
$$ \P(|A(W_n)| \geq T) \ll (\frac{O(k)}{T})^k + T^{-k} n^{O(k)} \exp(-(n\eta)^c).$$
If we set $k$ to be the largest even integer less than $T^{c_0}$ for some absolute constant $c_0$, and if $C_0$ is sufficiently large depending on $c_0$, we obtain \eqref{pawn} as desired, thanks to the assumptions $\log^{C_0} n \leq T \leq n\eta$.

\begin{remark} An inspection of the above argument reveals that we in fact have the slight refinement
$$  \P( |A(W_n)| \geq T ) \ll n^{O(1)} \exp( - c T )$$
in the regime $T \leq (n\eta)^c$, since in this regime we may take $k$ to be a small multiple of $T$ (rounded off to the nearest even integer, of course).  Unfortunately, this refinement does not appear to immediately offer any significant improvement to the conclusion of Theorem \ref{main}.
\end{remark}

It remains to establish Proposition \ref{stab-mom}.  This will be achieved in the next section.

\section{Stability of high moments}\label{stable-sec}

We now prove Proposition \ref{stab-mom}.  We introduce a definition:

\begin{definition}[Elementary matrix]  An \emph{elementary matrix} is a matrix which has one of the following forms
\begin{equation}\label{vform}
 V = e_a e_a^*, e_a e_b^* + e_b e_a^*, \sqrt{-1} e_a e_b^* - \sqrt{-1} e_b e_a^*
\end{equation}
with $1 \leq a,b \leq n$ distinct, where $e_1,\ldots,e_n$ is the standard basis of $\C^n$.
\end{definition}

As $M_n, M'_n$ are real-adjacent, one can write
$$ M_n = M_n^0 + \xi V; \quad M'_n = M_n^0 + \xi' V$$
for some elementary matrix $V$, some random matrix $M_n^0$, and some real random variables $\xi, \xi'$ independent of $M_n^0$ that match moments to $m^{\operatorname{th}}$ order and obey the exponential decay condition
\begin{equation}\label{exp-decay-again}
 \P( |\xi| \geq t^C ), \P( |\xi'| \geq t^C ) \leq e^{-t}
\end{equation}
for all $t \geq C'$ and some $C,C' > 0$.

We now recall some (deterministic) resolvent stability results concerning matrices of the form $M_n^0 + t V$.  Define the matrix norm $\|R \|_{(\infty,1)}$ of a $n \times n$ matrix $R = (R_{ij})_{1 \leq i,j \leq 1}$ by the formula
$$ \|R\|_{(\infty,1)} := \sup_{1 \leq i,j \leq n} |R_{ij}|.$$

\begin{proposition}[Stability of resolvent]\label{stabres}  Let $M_n^0$ be a Hermitian matrix, let $V$ be an elementary matrix, and let $t$ be a real number.  Let $z := E + \sqrt{-1} \eta$ be a complex number with $\eta>0$. Write
$$ R_t := (M_n^0 + tV - z)^{-1}$$
and suppose that
$$ |t| \|R_0\|_{(\infty,1)} = o(\sqrt{n}).$$
Then
$$ \|R_t\|_{(\infty,1)} = (1+o(1)) \|R_0\|_{(\infty,1)}.$$
Furthermore, if we set $s_t := \frac{1}{n} \tr R_t$, then we have the Taylor expansion
$$
 s_t = s_0 + \sum_{j=1}^m n^{-j/2} c_j t^j + O( n^{-(m+1)/2} |t|^{m+1} \|R_0\|_{(\infty,1)}^{m+1} \min( \|R_0\|_{(\infty,1)}, \frac{1}{n\eta} ) )
$$
for any fixed nonnegative $m=O(1)$, where the coefficients $c_j$ are independent of $t$ and obey the bounds
\begin{equation}\label{cj-bound}
|c_j| \ll \|R_0\|_{(\infty,1)}^{j} \min( \|R_0\|_{(\infty,1)}, \frac{1}{n\eta} ).
\end{equation}
for all $1 \leq j \leq m$.
\end{proposition}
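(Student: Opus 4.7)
The plan is to exploit the rank-at-most-two structure of the elementary matrix $V$ via the Sherman--Morrison--Woodbury identity, combined with Taylor expansion of the resolvent in $t$.  Writing $V = UJU^*$ with $U = [e_a]$ or $U = [e_a,e_b]$ an $n\times k$ isometry ($k\le 2$) and $J$ a fixed $k\times k$ matrix of size $O(1)$, one obtains the exact identity
\[ R_t = R_0 - tR_0 U (I_k + tJS)^{-1} JU^* R_0, \qquad S := U^* R_0 U, \]
so that each entry of $R_t - R_0$ is bounded by $O(|t|\,\|R_0\|_{(\infty,1)}^2\,\|(I_k+tJS)^{-1}J\|_{(\infty,1)})$.

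For the entrywise stability $\|R_t\|_{(\infty,1)} = (1+o(1))\|R_0\|_{(\infty,1)}$, I would use the fact that $\Im R_0 = \eta(M_n^0-z)^{-1}(M_n^0-\bar z)^{-1} \succeq 0$, which forces $\Im S \succeq 0$ and yields a lower bound on $|\det(I_k+tJS)|$ robust enough to keep $\|(I_k+tJS)^{-1}J\|_{(\infty,1)}$ under control even when $|t|\,\|R_0\|_{(\infty,1)}$ is large.  Combined with the hypothesis $|t|\,\|R_0\|_{(\infty,1)} = o(\sqrt n)$ this should give $\|R_t - R_0\|_{(\infty,1)} = o(\|R_0\|_{(\infty,1)})$.

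For the Taylor expansion, iterating the resolvent identity $R_t = R_0 - tR_0 V R_t$ yields
\[ R_t = \sum_{j=0}^m (-t)^j (R_0 V)^j R_0 + (-t)^{m+1} (R_0 V)^{m+1} R_t; \]
taking the normalised trace and setting $c_j := (-1)^j n^{j/2-1}\,\tr((R_0 V)^j R_0)$ puts the formula in the stated form.  The factorisation $(R_0 V)^j R_0 = R_0 U (JS)^{j-1} JU^* R_0$ reduces the trace to $\tr((JS)^{j-1} J\,U^* R_0^2 U)$, a trace over a $k\times k$ matrix.  Pointwise estimates give entries of $S$ of size $O(\|R_0\|_{(\infty,1)})$; bounding the entries of $U^* R_0^2 U$ via Cauchy--Schwarz and the Ward-type identity $(R_0 R_0^*)_{aa} = \Im(R_0)_{aa}/\eta$ then gives them size $O(\|R_0\|_{(\infty,1)}/\eta)$.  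The remainder term is handled identically, with the first-part bound on $\|R_t\|_{(\infty,1)}$ controlling the trailing $R_t$ factor.

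The main obstacle is to extract the sharp factor $\min(\|R_0\|_{(\infty,1)}, \tfrac{1}{n\eta})$ in the bound on $c_j$ rather than merely $\|R_0\|_{(\infty,1)}$: the $1/(n\eta)$ improvement over the naive pointwise bound relies on combining the Ward identity (contributing $1/\eta$) with the $1/n$ arising from the trace normalisation, and so must be implemented at the level of traces rather than individual entries.
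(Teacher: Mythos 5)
The paper gives no proof of this proposition: it is quoted from Lemma 12 and Proposition 13 of \cite{TVdetlaw}, so there is no in-paper argument to compare with. Your plan --- Sherman--Morrison--Woodbury for the rank-$\le 2$ perturbation, iterating the resolvent identity to obtain the Taylor coefficients, and the Ward identity $R_0 R_0^* = \eta^{-1}\Im R_0$ to convert one factor of $\|R_0\|_{(\infty,1)}$ into a factor of $1/\eta$ --- is precisely the mechanism of that reference, so the route is right.

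There is, however, a concrete normalisation gap that your estimates cannot close. Matching coefficients to the stated form forces $c_j = (-1)^j n^{j/2-1}\tr\bigl((R_0V)^jR_0\bigr)$, and the Ward/Cauchy--Schwarz bounds you describe give $|\tr((R_0V)^jR_0)| \ll \|R_0\|_{(\infty,1)}^{j-1}\min\bigl(\|R_0\|_{(\infty,1)}/\eta,\, n\|R_0\|_{(\infty,1)}^2\bigr)$, hence $|c_j| \ll n^{j/2}\,\|R_0\|_{(\infty,1)}^j\min(\|R_0\|_{(\infty,1)}, 1/(n\eta))$, a factor $n^{j/2}$ short of \eqref{cj-bound}. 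This excess is real, not slack: with $M_n^0 = 0$, $V = e_1e_1^*$, $z = 1 + \sqrt{-1}$ one computes $c_1 = -n^{-1/2}z^{-2}$, so $|c_1| \asymp n^{-1/2}$, while \eqref{cj-bound} demands $|c_1| \ll 1/n$; and with the same $M_n^0, V$ but $\eta\to 0$ and $t = (E^2+\eta^2)/E$ one gets $|t|\,\|R_0\|_{(\infty,1)} = O(1)$ yet $\|R_t\|_{(\infty,1)}/\|R_0\|_{(\infty,1)} \asymp E/\eta \to\infty$, so the first conclusion fails as literally written as well. The fix --- consistent with \cite{TVdetlaw} and with the use in Section~\ref{stable-sec}, where $\|R_0\|_{(\infty,1)}=O(1)$, the expansion is in $\xi/\sqrt n$, and $a_j = n\eta\,c_j$ must be $O(1)$ --- is that the perturbation is really $\tfrac{t}{\sqrt n}V$, i.e.\ one should read $R_t := \bigl(M_n^0 + \tfrac{t}{\sqrt n}V - z\bigr)^{-1}$. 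Then the resolvent identity $R_t = R_0 - \tfrac{t}{\sqrt n}R_0 V R_t$ supplies the $n^{-j/2}$ explicitly and leaves $c_j = (-1)^j n^{-1}\tr((R_0V)^jR_0)$, for which your estimates give \eqref{cj-bound} exactly; moreover the hypothesis becomes $\tfrac{|t|}{\sqrt n}\|R_0\|_{(\infty,1)} = o(1)$, so the $(\infty,1)$-stability is an immediate entrywise geometric series, and the more delicate $\Im S\succeq 0$ lower bound on $|\det(I_k+tJS)|$ you sketch --- which as stated does not control the case where $\Im(R_0)_{aa}$ is small --- is unnecessary.
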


\begin{proof}  See \cite[Lemma 12]{TVdetlaw} and \cite[Proposition 13]{TVdetlaw}. \end{proof}

Our objective is to establish \eqref{noodle}.  From Corollary \ref{rb} we see that
$$ \| R_\xi \|_{(\infty,1)} = O(1)$$
with probability $1-O(n^{O(1)} \exp(-(n\eta)^c)$, while from \eqref{exp-decay-again} we certainly have $\xi = o(\sqrt{n})$ with $1-O(n^{O(1)} \exp(-(n\eta)^c)$.   Hence by the first conclusion of Proposition \ref{stabres} (with $M_n^0$ and $V$ replaced with $M_n^0+\xi V$, and setting $t$ equal to $-\xi$) we have
\begin{equation}\label{mno}
 \| R_0 \|_{(\infty,1)} = O(1)
\end{equation}
with probability $1-O(n^{O(1)} \exp(-(n\eta)^c))$.  Using the crude bound $A(W_n) = O(n^{O(1)})$, we may thus condition $M_n^0$ to be fixed and obeying \eqref{mno}, since the contribution of the event where \eqref{mno} fails to $\E A(W_n)^k$ is $O(n^{O(k)} \exp(-(n\eta)^c))$.  

By Proposition \ref{stabres}, we thus see that whenever $\xi = o(\sqrt{n})$, one has
\begin{equation}\label{aww}
A(W_n) = A_0 + \sum_{j=1}^m a_j (\xi/\sqrt{n})^j + O( (|\xi|/\sqrt{n})^{m+1} )
\end{equation}
where the coefficients $A_0, a_j$ are deterministic (and in particular independent of $\xi, \xi'$, though they can depend on $\eta,n$), and $a_j$ obeys the bound $a_j = O(1)$.  

Suppose first that $|A_0| \leq k$.  Then one has
$$ |A(W_n)| \ll k $$
whenever $\xi = o(\sqrt{n})$, which gives a net contribution of $O(k)^k$ to $\E |A(W_n)|^k$; meanwhile, from \eqref{exp-decay-again}, the case when $\xi \gg \sqrt{n}$ contributes at most $O(n^{O(k)} \exp(-(n\eta)^c))$.  Thus we may assume that $|A_0| > k$.  Thus we have
$$
A(W_n) = A_0 \left(1 + \frac{1}{k} \left(\sum_{j=1}^m b_j (\xi/\sqrt{n})^j + O\left( (\xi/\sqrt{n})^{m+1} \right) \right) \right)$$
for some deterministic coefficients $b_1,\ldots,b_m = O(1)$, and assuming that $\xi = o(\sqrt{n})$.  Raising this to the $k^{\operatorname{th}}$ power (after using Taylor's theorem with remainder to expand $(1 + \frac{1}{k} x)^k$ to $m^{\operatorname{th}}$ order in the regime $x=o(1)$), we conclude that
$$
A(W_n)^k = A_0^k \left(1 + \sum_{j=1}^m d_j (\xi/\sqrt{n})^j + O\left( (|\xi|/\sqrt{n})^{m+1} \right) \right)$$
for some deterministic coefficients $d_1,\ldots,d_m = O(1)$ (which are allowed to depend on $k$), whenever $\xi = o(\sqrt{n})$.  Taking (conditional) expectations in $\xi$ (using \eqref{exp-decay-again} and the trivial bound $A(W_n) = O(n^{O(1)})$ to handle the tail event when $|\xi| \gg \sqrt{n}$) we conclude that
$$ \E(A(W_n)^k|M_n^0) = A_0^k \left(1 + \sum_{j=1}^m d_j n^{-j/2} \E \xi^j + O( n^{-(m+1)/2} ) \right) + O( n^{O(k)} \exp(-(n\eta)^c) ).$$
and thus
$$ \E A(W_n)^k = \E\left(A_0^k \left(1 + \sum_{j=1}^m d_j n^{-j/2} \E \xi^j + O( n^{-(m+1)/2} ) \right)\right) + O( n^{O(k)} \exp(-(n\eta)^c) ) + O(k)^k.$$
Similarly we have
$$ \E A(W'_n)^k = \E\left(A_0^k \left(1 + \sum_{j=1}^m d_j n^{-j/2} \E (\xi')^j + O( n^{-(m+1)/2} )\right)\right) + O( n^{O(k)} \exp(-(n\eta)^c) ) + O(k)^k.$$
Since $\xi$ and $\xi'$ match to order $k$, we obtain the claim.  This concludes the proof of Proposition \ref{stab-mom} and hence Theorem \ref{main}.

\begin{remark}\label{non-indep}  It is possible to adapt the above arguments to the case when $\Re \xi_{ij}$ and $\Im \xi_{ij}$ are not assumed to be independent.  The main new difficulty is that instead of swapping the real and imaginary parts of a single entry $\xi_{ab}$ of $M_n$ (and its transpose $\xi_{ba}$) separately, one has to swap them together.  This requires one to consider perturbations of the form
$$ M_n = M_n^0 + \xi_1 V_1 + \xi_2 V_2$$
where $V_1, V_2$ are two distinct elementary random variables, and $\xi_1, \xi_2$ are real random variables that are not necessarily independent and obeying the exponential decay hypothesis \eqref{exp-decay-again}.  However, it is possible to extend Proposition \ref{stabres} without much difficulty to the case of two-parameter perturbations and perform a similar argument to that given above.  We omit the details.
\end{remark}

\section{Extreme eigenvalues}\label{second-sec}

We now prove Theorem \ref{main-2}, by combining the arguments in previous sections with some ideas from \cite{EYY2} (and in particular, demonstrating a concentration of $\Im s_{W_n}(E + \sqrt{-1} \eta)$ that is better than $1/n\eta$ for some energy $E>2$).  By symmetry, it suffices to prove the bound for $\lambda_1(W_n)$.  We may of course assume that $n$ is large.

By standard large deviation estimates, one has
$$ \P( \lambda_1(W_n) \geq E ) \ll \exp( - c n^c \log E )$$
for any $E \geq 3$; see\footnote{One could also use the earlier estimates in \cite{meckes} or \cite{akv}; see also \cite{AGZ} for more discussion.} \cite[Lemma 7.2]{EYY}.  This already deals with the case when $n^{2/3} \leq T \leq n^{100}$ (say), and the case $T>n^{100}$ can be handled by crudely bounding $\lambda_1(W_n)$ by, say, the Frobenius norm of $W_n$ and using Condition \condo.  Thus we may restrict attention to the regime $T \leq n^{2/3}$, and show that
$$ \P( 2 + n^{-2/3} T \leq \lambda_1(W_n) \leq 3 ) \ll n^{O(1)} \exp( - c T^c ).$$
We may assume that $T \geq \log^{C_0} n$ for some suitably large absolute constant $C_0$, as the claim is trivial otherwise.

Suppose that $\lambda_1(W_n)$ was in the interval $[2 + n^{-2/3} T, 3]$.  Set $\eta := n^{-2/3}$, and let $B(W_n)$ denote the quantity
$$ B(W_n) := n \eta \Im s_{W_n}(E + \sqrt{-1} \eta).$$
From the identity
\begin{equation}\label{swain}
B(W_n) = \sum_{i=1}^n \frac{\eta^2}{|\lambda_i(W_n)-E|^2 + \eta^2}
\end{equation}
we conclude in particular that
$$ B(W_n) \geq \frac{1}{10}$$
where $E$ is the closest multiple of $n^{-2/3}$ in $[2+n^{-2/3} T, 3]$ to $\lambda_1(W_n)$.  Thus, by the union bound, it will suffice to show that
\begin{equation}\label{bween}
 \P( B(W_n) \geq \frac{1}{10} ) \ll n^{O(1)} \exp(-c T^c)
\end{equation}
for any fixed $E \in [2+n^{-2/3} T, 3]$.

Let $M'_n$ be drawn from GUE, and set $W'_n := \frac{1}{\sqrt{n}} M'_n$.  By Theorem \ref{conc-extreme}, we have
$$ \lambda_1(W'_n) \leq 2 + n^{-2/3} T/2$$
outside of an event of probability $O( \exp(-c T^{3/2}) )$; in particular, we have
\begin{equation}\label{e1}
 N_{[E - n^{-2/3} T/2, E + n^{-2/3} T/2]}(W'_n) = 0
 \end{equation}
outside of this event.

Also, from Corollary \ref{conc} and the union bound we see that outside of an event of probability $O( n^{O(1)} \exp(-cT) )$, one has
$$ N_I(W'_n) \leq n\int_I \rho_{sc}(y) \ dy + O( T^{0.1} )$$
(say) for all intervals $I$.  In particular, outside of this event, we have
\begin{equation}\label{e2}
\begin{split}
N_{ [E - 2^k n^{-2/3} T, E + 2^k n^{2/3} T] }(W'_n) &\leq
n \int_{2 - 2^k n^{-2/3} T}^2 \rho_{sc}(y)\ dy + O(T^{0.1})\\
&\ll 2^{3k/2} T^{3/2}
\end{split}
\end{equation}
for all $k \ge 1$, using the bound $\rho_{sc}(y) = O( (2-y)^{1/2} )$ when $y < 2$.  

From \eqref{e1}, \eqref{e2}, \eqref{swain}, and dyadic decomposition one easily establishes that
$$ B(W'_n) \ll \frac{1}{T^{1/2}}$$
outside of an event of probability $O( n^{O(1)} \exp(-cT^c) )$.

Let $\log n \leq k \leq n^{0.01}$ be an integer to be chosen later.  Since we may trivially bound $\Im s_{W_n}(E + \sqrt{-1} \eta)$ by $n^{O(1)}$, we conclude that
\begin{equation}\label{bwink}
\E B(W'_n)^k \ll O\left(\frac{1}{T}\right)^{k/2} + n^{O(k)} \exp(-cT^c).
\end{equation}

We claim the following stability result for $\E B(W_n)^k$, analogous to Proposition \ref{stab-mom}:

\begin{proposition}[Stability of moments]\label{stab-mom-2}  Let $M_n, M'_n$ be two adjacent Wigner matrices obeying Condition {\condo}, whose moments match to order $m$ for some fixed $m = O(1)$.  Set $W_n := \frac{1}{\sqrt{n}} M_n$ and $W'_n := \frac{1}{\sqrt{n}} M_n$.  Then for any integer $\log n \leq k \leq n^{0.1}$, one has
\begin{equation}\label{noodle-2}
 \E B(W_n)^k \leq (1 + O( (k/\sqrt{n})^{m+1} )) \E B(W'_n)^k + O(100^{-k}) + O( n^{O(k)} \exp(-cT^c) ).
\end{equation}
\end{proposition}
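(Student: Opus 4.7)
The plan is to mimic the proof of Proposition~\ref{stab-mom}.  As there, write $M_n = M_n^0 + \xi V$ and $M'_n = M_n^0 + \xi' V$ with $\xi, \xi'$ matching moments to order $m$.  Conditioning on the good event $\|R_0\|_{(\infty,1)} = O(1)$ (whose complement, by Corollary~\ref{rb} and the crude bound $B \le n^{O(1)}$, contributes the $O(n^{O(k)} \exp(-cT^c))$ term after choosing constants so that $\exp(-(n\eta)^c) \le \exp(-cT^c)$; recall $T \le n^{2/3}$ and $n\eta \ge 1$), Proposition~\ref{stabres} produces the deterministic Taylor expansion
\[
 B(W_n) = B_0 + \sum_{j=1}^m \hat a_j (\xi/\sqrt n)^j + O\bigl((|\xi|/\sqrt n)^{m+1}\bigr),
\]
valid for $|\xi| = o(\sqrt n)$, with $B_0 := B(W_n^0) > 0$ and coefficients $\hat a_j = O(1)$ (using the identity $n\eta \cdot \min(\|R_0\|_{(\infty,1)}, 1/(n\eta)) = O(1)$, available because $n\eta \ge 1$).

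The new feature relative to Proposition~\ref{stab-mom} is that $B_0$ need not exceed $k$, so one cannot rescale by $B_0/k$ to obtain bounded expansion coefficients.  I would split on the threshold $B_0 \le 1/200$ versus $B_0 > 1/200$.  If $B_0 \le 1/200$, restricting to $|\xi| \le \sqrt n/(Ck)$ (for a suitable absolute $C$) forces $B(W_n) \le 1/100$ and hence $B(W_n)^k \le 100^{-k}$; the complementary tail contributes $O(n^{O(k)} \exp(-cT^c))$ by Condition~\condo\ combined with $k \le n^{0.1}$ and $T \le n^{2/3}$.  The same bound holds for $B(W'_n)$, so both expectations are $\le 100^{-k}$ up to tail and \eqref{noodle-2} is trivial.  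If $B_0 > 1/200$, factor $B(W_n) = B_0(1 + u(\xi))$ where $u(\xi) = \sum y_j (\xi/\sqrt n)^j + O((|\xi|/\sqrt n)^{m+1})$ and $y_j := \hat a_j/B_0 = O(1)$.  For $|\xi| \le \sqrt n/(C_1 k)$ with $C_1$ large, $|u| \le 1/(2k)$ keeps $(1+u)^{k-s} = O(1)$ for all $0 \le s \le m+1$; together with $|u^{(j)}(\xi^*)| = O(n^{-j/2})$, Fa\`a di Bruno's formula bounds the $(m+1)$st derivative of $f(\xi) := (1+u(\xi))^k$ by $O(k^{m+1}/n^{(m+1)/2})$, so the Lagrange remainder of the Taylor expansion of $f$ about $\xi = 0$ truncated at order $m$ satisfies $|R(\xi)| \ll (k|\xi|/\sqrt n)^{m+1}$ in this regime.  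Taking conditional expectations, moment matching up to order $m$ annihilates the Taylor polynomial part, and the tail $|\xi| > \sqrt n/(C_1 k)$ contributes $n^{O(k)} \exp(-n^{c'})$ (absorbable), yielding the additive comparison
\[
 \bigl|\E B(W_n)^k - \E B(W'_n)^k\bigr| \ll B_0^k (k/\sqrt n)^{m+1} + n^{O(k)} \exp(-cT^c).
\]

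To promote this to the multiplicative form~\eqref{noodle-2}, I would prove the lower bound $\E B(W'_n)^k \ge \tfrac{1}{2} B_0^k$ via Jensen's inequality applied to the convex function $v \mapsto (1+v)^k$ on $\{v \ge -1\}$ (the constraint $1 + u'(\xi') \ge 0$ is automatic since $B(W'_n) \ge 0$ and $B_0 > 0$).  A direct calculation using $\E \xi' = 0$ and $y_j = O(1)$ gives $\E u' = O(1/n)$, so $(1 + \E u')^k \ge (1 - O(1/n))^k \ge 1/2$ for $k \le n^{0.1}$; hence $B_0^k \le 2\E B(W'_n)^k$ and the additive comparison becomes \eqref{noodle-2}.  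I expect this last step to be the principal obstacle: the Taylor remainder estimate is intrinsically additive in $B_0^k$, and converting it into a multiplicative comparison with $\E B(W'_n)^k$ requires an independent a priori lower bound, which the Jensen argument supplies cleanly precisely because we have already conditioned on $\|R_0\|_{(\infty,1)} = O(1)$.
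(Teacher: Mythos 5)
Your proposal is correct and follows essentially the same route as the paper: condition on $\|R_0\|_{(\infty,1)}=O(1)$ via Corollary~\ref{rb}, Taylor-expand $B$ using Proposition~\ref{stabres}, split on $B_0 \lessgtr 1/200$, expand $(1+u)^k$ to order $m$, and invoke moment matching. Your explicit Jensen argument for $\E B(W'_n)^k \ge \tfrac12 B_0^k$ supplies the justification that the paper leaves implicit when it concludes with ``the claim follows from the matching moments hypothesis'' --- the paper's unstated reason being that, since $k \le n^{0.1}$ and $\E\xi' = 0$, the correction $\sum_{j\ge 2} c_j(k/\sqrt n)^j\E(\xi')^j + O((k/\sqrt n)^{m+1})$ is $o(1)$, forcing $\E B(W'_n)^k \gg B_0^k$ and hence the multiplicative form.
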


Applying this proposition $n^2-n$ times with $m=4$ and $n$ times with $m=2$ we conclude that
$$ \E B(W_n)^k \ll (1 + O(k^5/n^{5/2}))^{n^2-n} (1 + O(k^3/n^{3/2}))^{n} ( \E B(W'_n)^k + O(n^{O(1)} 100^{-k}) + O( n^{O(k)} \exp(-cT^c) ) )$$
and thus (using \eqref{bwink} and the hypothesis $k \leq n^{0.01}$)
$$ \E B(W_n)^k \ll n^{O(1)} 100^{-k} + n^{O(k)} \exp(-cT^c) .$$
The desired claim \eqref{bween} then follows from Markov's inequality by taking $k = T^{c_0}$ for some sufficiently small $c_0>0$ (and assuming $C_0$ sufficiently large depending on $c_0>0$).

It remains to establish Proposition \ref{stab-mom-2}.  As in the previous section, we write
$$ M_n = M_n^0 + \xi V; \quad M'_n = M_n^0 + \xi' V$$
for some elementary matrix $V$, some random matrix $M_n^0$, and some real random variables $\xi, \xi'$ independent of $M_n^0$ that match moments to $m^{\operatorname{th}}$ order and obey the exponential decay condition \eqref{exp-decay-again}.  Arguing exactly as before, we may condition $M_n^0$ to be a deterministic matrix for which
$$ \| R_0 \|_{(\infty,1)} = O(1).$$
Using Proposition \ref{stabres} as before, we see that
$$ B(W_n) = B_0 + \sum_{j=1}^m a_j (\xi/\sqrt{n})^j + O( (|\xi|/\sqrt{n})^{m+1} )$$
for some deterministic coefficients $B_0$ and $a_j = O(1)$, whenever $\xi = o(\sqrt{n})$.

Suppose first that $|B_0| \leq 1/200$.  Then one has $|B(W_n)| \leq 1/100$ whenever $\xi = o(\sqrt{n})$, and so this case contributes $O( 100^{-k} ) + O( n^{O(k)} \exp(-cn^c) )$ to \eqref{noodle-2}, which is acceptable.  Thus we may restrict attention to the case when $|B_0| > 1/200$.  Then we may write
$$ B(W_n) = B_0 \left( 1 + \sum_{j=1}^m b_j (\xi/\sqrt{n})^j + O( (|\xi|/\sqrt{n})^{m+1} ) \right)$$
whenever $\xi = o(\sqrt{n})$, where the $b_j=O(1)$ are deterministic coefficients.

Suppose now that $\xi = O(n^{0.3})$.  Since $k \leq n^{0.01}$, we may perform a Taylor expansion of $(1+x)^k$ to order $m$ for $x = O(n^{-0.2})$ and conclude that
$$ B(W_n)^k = B_0^k \left( 1 + \sum_{j=1}^m c_j (k \xi/\sqrt{n})^j + O( (k |\xi|/\sqrt{n})^{m+1} ) \right)$$
in this regime, where the $c_j=O(1)$ are deterministic coefficients (which are allowed to depend in $k$).  Taking expectations as in the preceding section, and using \eqref{exp-decay-again} to handle those $\xi$ with $|\xi| \geq n^{0.3}$, we conclude that
\begin{align*}
\E B(W_n)^k &= \E\left(B_0^k \left( 1 + \sum_{j=1}^m c_j k^j n^{-j/2} \E \xi^j + O( (k/\sqrt{n})^{m+1} )\right)\right)\\
&\quad + O( n^{O(k)} \exp(-(n\eta)^c) ) + O( 100^{-k} ),
\end{align*}
and similarly for $\E B(W'_n)^k$; and the claim follows from the matching moments hypothesis.

\begin{remark} As in Remark \ref{non-indep}, it is possible to extend these arguments to the case when $\Re(\xi_{ij})$ and $\Im(\xi_{ij})$ are not independent; we leave the details to the interested reader.
\end{remark}

\begin{remark} \label{remark:bestT} Note that when one has four matching moments rather than three, the error terms are more favorable by a factor of $\sqrt{n}$, giving some additional room to vary the parameters of the argument by small powers of $n$.  Because of this, it is possible to modify the proof of Theorem \ref{wigo} to conclude in this case that
$$  \P( |A(W_n)| \geq T ) \ll n^{O(1)} \exp( - c T )$$
in the regime $0 < T \leq n^c$ for a sufficiently small $c$.  This is achieved by arguing as in this section, except that one allows the resolvent $\|R_0\|_{(\infty,1)}$ to be as large as $O(n^c)$ rather than $O(1)$ in order to keep the failure probability bounded by $O(n^{O(1)} \exp(-n^c))$ rather than $O( n^{O(1)} \exp(-(n\eta)^c) )$.  We omit the details.  As a consequence, we can sharpen the conclusion of Theorem \ref{main} to
$$ \P\left( |N_I(W_n) - n \int_I \rho_\sc(y)\ dy| \geq T\right) \ll n^{O(1)} \exp( - c T )$$
when $0 < T \leq n^c$ and $M_n$ matches moments with GUE to fourth order off the diagonal and second order on the diagonal.
\end{remark}

\appendix

\section{Local semicircle law}

In this appendix we establish some preliminary local semicircle law estimates, following the treatment in \cite{EYY} and \cite{TVlocal1}.  As the methods used here are now standard, and the results very close to those in \cite{EYY} and \cite{TVlocal1}, we shall be somewhat brief in our treatment.

We first recall a concentration estimate of Hanson and Wright \cite{hanson}.

\begin{proposition}[Concentration of quadratic forms]\label{hanson}  Let $X \in \C^n$ be a vector of independent random variables $\xi_1,\ldots, \xi_n$ of mean zero and variance $\sigma^2$, obeying the uniform subexponential decay bound
$$ \P( |\xi_i|\geq t^C \sigma ) \leq e^{-t}$$
for all $t \geq C'$ and $1 \leq i \leq n$, and some $C, C' >0$ independent of $n$.  Let $A$ be an $n \times n$ matrix.  Then for any $T>0$, one has
$$ \P( |X^* A X - \sigma^2 \tr A| \geq T \sigma^2 (\tr(A^* A))^{1/2} ) \ll \exp( - c T^c ).$$
Thus
$$ X^* A X = \sigma^2 ( \tr A + O( T \tr(A^* A))^{1/2} )$$
outside of an event of probability $O( \exp(-cT^c) )$.
\end{proposition}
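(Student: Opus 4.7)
The plan is to follow the classical decoupling-plus-Bernstein scheme underlying the Hanson-Wright inequality. Decompose
\[ X^* A X - \sigma^2 \tr A = D + F, \qquad D := \sum_{i=1}^n A_{ii}(|\xi_i|^2 - \sigma^2), \quad F := \sum_{i \neq j} A_{ij}\bar\xi_i \xi_j, \]
and bound $|D|$ and $|F|$ separately on an event of probability $1 - O(\exp(-cT^c))$, noting that $\sum_i A_{ii}^2 \leq \tr(A^*A)$ and $\max_i |A_{ii}| \leq (\tr A^*A)^{1/2}$.

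First I would control $D$. Each summand $A_{ii}(|\xi_i|^2 - \sigma^2)$ is an independent centered random variable; the subexponential hypothesis on $\xi_i$ implies a subexponential tail for $|\xi_i|^2$ with Orlicz norm $\ll \sigma^2$. The standard Bernstein inequality for sums of independent centered subexponentials then yields
\[ \P(|D| \geq T \sigma^2 (\tr A^*A)^{1/2}) \ll \exp\Bigl( -c \min\bigl( T^2, T (\tr A^* A)^{1/2}/\max_i |A_{ii}| \bigr)^{c} \Bigr) \ll \exp(-c T^c). \]

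For $F$, I would apply a decoupling inequality (de la Peña--Montgomery-Smith) to compare $F$ with its decoupled version
\[ \widetilde F := \sum_{i,j} A_{ij} \bar\xi_i \xi'_j, \]
where $X' = (\xi'_j)$ is an independent copy of $X$, at the cost of an absolute constant in the tail. Conditioning on $X'$, the random variable $\widetilde F$ is a linear combination $\sum_i \bar\xi_i (AX')_i$ of independent centered subexponentials, so another application of Bernstein gives
\[ \P\bigl(|\widetilde F| \geq T' \sigma \|AX'\|_2 \;\big|\; X'\bigr) \ll \exp(-c T'^{c}) \]
for any $T'>0$. It therefore suffices to dominate $\|AX'\|_2^2 = (X')^* (A^*A) X'$ by a constant multiple of its mean $\sigma^2 \tr(A^*A)$ outside an event of probability $\exp(-cT^c)$.

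The main obstacle is this last step, which is itself a one-sided Hanson-Wright bound for the positive semidefinite matrix $A^*A$. I would resolve it either by induction on the matrix size or, more cleanly, by the moment method: for even $p$, expand $\E [(X')^*(A^*A) X' - \sigma^2 \tr(A^*A)]^p$ combinatorially and group the resulting monomials by the graph of paired indices, using the subexponential moment estimate $\E |\xi_i|^{2k} \leq (Ck)^{Ck} \sigma^{2k}$ to obtain
\[ \E \bigl|(X')^* (A^*A) X' - \sigma^2 \tr(A^*A)\bigr|^p \leq (Cp)^{Cp} \bigl( \sigma^2 \tr(A^*A) \bigr)^p, \]
whence Markov's inequality (choosing $p$ a small power of $T$) gives the required $\exp(-cT^c)$ bound. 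Combining this with the earlier Bernstein estimates via a union bound establishes the proposition.
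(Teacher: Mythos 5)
The paper does not actually prove this proposition; the given ``proof'' is a citation to \cite[Lemma B.1]{EYY}, whose argument is a direct high-moment expansion of $\E|X^*AX-\sigma^2\tr A|^p$ over paired index sequences, using the moment growth $\E|\xi_i|^{2k}\le (Ck)^{Ck}\sigma^{2k}$ implied by the tail hypothesis, followed by Markov's inequality with $p$ a small power of $T$. Your proposal is a genuinely different, self-contained route following the modern decoupling-plus-Bernstein derivation of Hanson--Wright: split into diagonal and off-diagonal parts, control the diagonal by a Bernstein-type bound, decouple the off-diagonal chaos via de la Pe\~na--Montgomery-Smith, condition on $X'$ to reduce to a linear form, and then bootstrap with a one-sided concentration bound for the positive-semidefinite form $\|AX'\|_2^2=(X')^*A^*AX'$. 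The modular structure is attractive: each step is a standard concentration ingredient, and it makes transparent why only the Frobenius norm of $A$ enters. The cost, as you correctly identify, is that the last step is itself a (one-sided, PSD) Hanson--Wright bound, and your proposed resolution of it by direct moment expansion is essentially the same combinatorial computation that \cite{EYY} applies to the full bilinear form in one stroke; so the decoupling route trades a single global moment computation for a cleaner outer structure plus a smaller moment computation inside. Two details worth tightening: the decoupled sum $\widetilde F$ should run over $i\ne j$ only (the de la Pe\~na--Montgomery-Smith inequality compares off-diagonal chaoses, and including $i=j$ introduces extra diagonal terms $\sum_i A_{ii}\bar\xi_i\xi_i'$ that must then be re-absorbed, e.g.\ via the same PSD moment bound applied to $\sum_i A_{ii}^2|\xi_i'|^2$); and the hypothesis $\P(|\xi_i|\ge t^C\sigma)\le e^{-t}$ gives only a stretched-exponential (sub-Weibull) tail rather than a genuine $\psi_1$-subexponential one, so the Bernstein inequality you invoke must be the stretched-exponential variant --- this is precisely why the conclusion has exponent $T^c$ rather than $T$, and the loss must be tracked consistently through both the diagonal bound and the conditional linear-form bound.
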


\begin{proof} See \cite[Lemma B.1]{EYY}.  (Note that a factor of $\sigma$ is missing from the statement of the exponential decay hypothesis in the lemma as stated in \cite{EYY}, which is needed in order to reduce to the $\sigma=1$ case.)
\end{proof}

\begin{corollary}[Distance between a random vector and a subspace]\label{dust}  Let $X$ and $\sigma$ be as in Proposition \ref{hanson}, and let $V$ be a $d$-dimensional complex subspace of $\C^n$.  Let $\pi_V$ be the orthogonal projection to $V$.  Then one has
$$ 0.9 d \sigma^2 \leq \| \pi_V(X) \|^2 \leq 1.1 d \sigma^2$$
outside of an event of probability $O( \exp( - c d^c ) )$.
\end{corollary}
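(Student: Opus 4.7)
The plan is to apply Proposition \ref{hanson} with $A := \pi_V$. Since $\pi_V$ is a self-adjoint idempotent with range of dimension $d$, we have $A^* = A$ and $A^* A = A^2 = A$, so
$$ \tr A = d, \qquad \tr(A^* A) = d. $$
Moreover $X^* \pi_V X = \|\pi_V(X)\|^2$. Plugging into Proposition \ref{hanson} yields
$$ \|\pi_V(X)\|^2 = \sigma^2 \bigl( d + O(T \sqrt{d}) \bigr) $$
outside of an event of probability $O(\exp(-c T^c))$, for any $T > 0$.

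To convert this into the two-sided multiplicative bound, choose $T$ just small enough that $T\sqrt d$ is at most a fixed small constant multiple of $d$; concretely, taking $T := c_0 \sqrt{d}$ for a sufficiently small absolute constant $c_0 > 0$ forces the error term $O(T\sqrt d)$ to be bounded by $0.1\, d$, which gives
$$ 0.9\, d \sigma^2 \leq \|\pi_V(X)\|^2 \leq 1.1\, d \sigma^2 $$
on the same good event. The corresponding failure probability is then $O(\exp(-c (c_0 \sqrt d)^c)) = O(\exp(-c' d^{c'}))$ for some $c' > 0$ depending only on the constant $c$ of Proposition \ref{hanson}, which is exactly the form claimed.

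There is no real obstacle here: once one has identified the right matrix to feed into Hanson--Wright, the statement is a one-line specialization followed by choosing the deviation parameter $T$ to be of order $\sqrt{d}$. The only thing worth double-checking is that $\pi_V$ genuinely satisfies $\tr(\pi_V^* \pi_V) = d$ (so that the Hanson--Wright fluctuation is of order $\sqrt d$ rather than larger), which holds because $\pi_V$ is an orthogonal projection onto a $d$-dimensional subspace; no use is made of any specific basis for $V$.
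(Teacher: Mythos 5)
Your proof is correct and matches the paper's argument essentially line for line: both apply Proposition \ref{hanson} with $A = \pi_V$, use $\tr A = \tr(A^*A) = d$, and take $T$ of order $\sqrt{d}$ (the paper uses $T = d^{1/2}/10$) so that the fluctuation term is at most $0.1\,d\sigma^2$ and the failure probability is $O(\exp(-cd^c))$.
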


\begin{proof}  Apply the preceding proposition with $A := \pi_V$ (so $\tr A = \tr A^* A = d$) and $T := d^{1/2} / 10$.
\end{proof}

\begin{remark}
We can also use Talagrand's inequality as in \cite{TVlocal1}, combining with a truncation argument (to bound each entries by some properly chosen 
quantity $K$). In the case when the atom variables have very fast decay (such as sub-gaussian) or bounded (such as Bernoulli),  this calculation 
will actually lead to a decent bound on the value of $c$ in Theorem \ref{main}. 
\end{remark} 

We can now establish a crude upper bound on the counting function $N_I$ of a Wigner matrix.

\begin{proposition}[Crude upper bound]\label{crudo}  Let $M_n$ be a Wigner matrix obeying Condition \condo, and let $W_n := \frac{1}{\sqrt{n}} M_n$.  Then for any interval $I$, one has
$$ N_I(W_n) = O(n |I|)$$
outside of an event of probability $O( n^{O(1)} \exp( - c (n|I|)^c ) )$.
\end{proposition}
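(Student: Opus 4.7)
The plan is to bound $N_I(W_n)$ via the Stieltjes transform at scale $\eta := |I|/2$. Taking $E$ to be the center of $I$ and $z := E + \sqrt{-1}\eta$, the trivial inequality
$$ \Im s_{W_n}(z) = \frac{1}{n} \sum_i \frac{\eta}{(\lambda_i(W_n) - E)^2 + \eta^2} \geq \frac{1}{2n\eta} N_I(W_n) $$
reduces the claim to showing that $\Im s_{W_n}(z) = O(1)$ outside of an event of probability $O(n^{O(1)} \exp(-c(n|I|)^c))$. We may restrict to the regime $n\eta \geq K \log n$ for a large constant $K$, since otherwise the desired probability bound is vacuous.

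The main tool is the Schur complement identity
$$ (W_n - z)^{-1}_{kk} = \frac{1}{W_{kk} - z - \xi_k}, \qquad \xi_k := a_k^* (W_n^{(k)} - z)^{-1} a_k,$$
where $W_n^{(k)}$ is the $(n-1) \times (n-1)$ minor obtained by deleting the $k$-th row and column, and $a_k \in \C^{n-1}$ is the corresponding column with its diagonal entry removed. Since $B_k := \Im (W_n^{(k)} - z)^{-1}$ is positive semidefinite, $\Im \xi_k = a_k^* B_k a_k \geq 0$ and hence
$$ \Im (W_n - z)^{-1}_{kk} = \frac{\eta + \Im \xi_k}{|W_{kk} - z - \xi_k|^2} \leq \frac{1}{\eta + \Im \xi_k}.$$
Using the standard bounds $\tr B_k = (n-1)\Im s_{W_n^{(k)}}(z)$ and $\tr B_k^2 \leq \tr B_k / \eta$, together with the Hanson--Wright concentration estimate (Proposition \ref{hanson}) applied to $a_k$ (whose entries have variance $1/n$) with parameter $T = (n\eta)^{c_1}$ for a small $c_1 > 0$, one obtains
$$ \Im \xi_k = \tfrac{n-1}{n}\, \Im s_{W_n^{(k)}}(z) + O\!\bigl( T \sqrt{\Im s_{W_n^{(k)}}(z)/(n\eta)} \bigr)$$
off an event of probability $O(\exp(-cT^c))$. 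Eigenvalue interlacing further gives $\Im s_{W_n^{(k)}}(z) = \Im s_{W_n}(z) + O(1/(n\eta))$, so averaging the Schur bound over $k$ and taking a union bound leads to the self-bounding inequality
$$ S \leq \frac{1+o(1)}{\eta + S - O\bigl(T\sqrt{S/(n\eta)}\bigr)}, \qquad S := \Im s_{W_n}(z).$$
Once the error is controlled, this forces $S(S+\eta) \leq 1 + o(1)$ and hence $S = O(1)$.

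The main obstacle is the self-referential nature of this inequality: the Hanson--Wright error depends on $S$ itself. I would handle this via a short bootstrap, starting from the deterministic bound $S \leq 1/\eta$ and iterating the Schur estimate a bounded number of times to progressively push $S$ down to $O(1)$. Condition \condo\ is invoked only to ensure $|W_{kk}| = o(1)$ with very high probability, which controls the real part of the denominator in the Schur formula and therefore does not interfere with the imaginary-part analysis.
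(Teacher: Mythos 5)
Your proposal is correct, but it takes a genuinely different route from the paper's. The paper argues by contradiction: if $N_I \geq Cn\eta$ for $C$ large, the pigeonhole principle produces a single index $i$ with $\Im R_{ii} \gg C$, the Schur complement then forces $\Im X^* R^{(i)} X \ll C^{-1}$, and Cauchy interlacing produces a subspace $V$ of dimension $\gg Cn\eta$ (one of $O(n^2)$ candidates, requiring a union bound) on which the concentration estimate of Corollary \ref{dust} forces the opposite inequality $\|\pi_V X\|^2 \gg C\eta$. Your argument instead applies the Schur complement simultaneously at all indices $k$, uses Hanson--Wright directly on the quadratic form $a_k^* B_k a_k$ together with the observation $\tr B_k^2 \leq \tr B_k/\eta$ (which sidesteps any need to pre-count eigenvalues), and closes via interlacing on the Stieltjes transform and a self-bounding inequality for $S = \Im s_{W_n}(z)$. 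Both approaches ultimately rest on Proposition \ref{hanson}, but yours replaces the geometric eigenvector-subspace step by the analytic identity $\tr B_k = (n-1)\Im s^{(k)}$, avoids the $O(n^2)$ union bound over candidate subspaces, and is structurally closer to the derivation of the self-consistent equation in Proposition \ref{sc} (without, however, invoking that proposition --- there is no circularity, since the bootstrap starts from the trivial bound $S \leq 1/\eta$).

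Two minor remarks. First, the bootstrap is not strictly necessary: once the Hanson--Wright events hold with $T = (n\eta)^{c_1}$, a single dichotomy ("either $S \leq C_0$ or the error term $T\sqrt{S/(n\eta)} \leq S/2$") already closes the argument, since $(n\eta)^{2c_1-1} = o(1)$. Second, if one does iterate, the number of steps needed to descend from $1/\eta$ to $O(1)$ is $O(\log\log(1/\eta))$ rather than $O(1)$; this is harmless because the iteration is deterministic once the Hanson--Wright events (fixed in advance with a single parameter $T$) have been conditioned on, so no additional union-bound penalty accrues, but the phrase ``a bounded number of times'' should be adjusted.
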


\begin{proof}  Fix $I$, which we write as $I = [E-\eta,E+\eta]$.  Suppose that
\begin{equation}\label{niwnc}
 N_I(W_n) \geq C n \eta
\end{equation}
for some sufficiently large absolute constant $C$ to be chosen later.  We will show that this leads to a contradiction outside of an event of probability $O( n^{O(1)} \exp( - c (n\eta)^c )$.

From the identity
$$ \Im s_{W_n}(E + \sqrt{-1} \eta) = \frac{1}{n} \sum_{i=1}^n \frac{\eta}{|\lambda_i(W_n) - E|^2 + \eta^2}$$
and \eqref{niwnc}, we see that
$$ \Im s_{W_n}(E + \sqrt{-1} \eta) \gg C.$$
On the other hand, we can write the Stieltjes transform $s_{W_n}$ in terms of the coefficients $R_{ij}$ of the resolvent as
$$ s_{W_n}(E + \sqrt{-1} \eta) = \frac{1}{n} \sum_{i=1}^n R_{ii}(E+\sqrt{-1} \eta).$$
Thus, by the pigeonhole principle, we have
$$ \Im R_{ii}(E + \sqrt{-1} \eta) \gg C$$
for some $1 \leq i\leq n$.  By symmetry (and conceding a factor of $n$ in the failure probability estimates) we may take $i=n$.

Now, a standard Schur complement computation (see e.g. \cite[Lemma 42]{TVlocal1}) shows that
\begin{equation}\label{shur}
 R(z)_{nn} = \frac{1}{\frac{1}{\sqrt{n}} \xi_{nn} - z - X^* R^{(n)}(z) X}
 \end{equation}
where $R^{(n)}(z) = (W_n^{(n)}-z)^{-1}$ is the resolvent corresponding to the $n-1 \times n-1$ matrix $W_n^{(n)}$ formed by removing the $n^{\operatorname{th}}$ row and column from $W_n$, $\xi_{nn}$ is the bottom right entry of $M_n$, and $X$ is the rightmost column of $W_n$ (after removing the bottom entry $\frac{1}{\sqrt{n}} \xi_{nn}$).  In particular, using the trivial bound $|\Im \frac{1}{z}| \leq \frac{1}{|\Im z|}$, we conclude that
$$ \Im R_{nn}(E + \sqrt{-1} \eta) \leq \frac{1}{\eta + \Im X^* R^{(n)}(E + \sqrt{-1} \eta) X} \leq \frac{1}{ \Im X^* R^{(n)}(z) X }$$
and thus
$$ \Im X^* R^{(n)}(E + \sqrt{-1} \eta) X \ll C^{-1}.$$
Now, by the Cauchy interlacing law, $W_n^{(n)}$ has $\gg Cn\eta$ consecutive eigenvalues in $I$.  There are $O(n^2)$ possibilities for the starting and ending index of these eigenvalues.  If we let $V$ be the space spanned by the corresponding eigenvectors, then $\dim(V) \gg Cn \eta$, and from the spectral theorem we see that
$$ \Im X^* R^{(n)}(E + \sqrt{-1} \eta) X \gg \|\pi_V(X)\|^2 / \eta$$
and thus
$$ \| \pi_V(X)\|^2 \ll \frac{1}{C} \eta.$$
On the other hand, from \eqref{dust} we see that
$$ \| \pi_V(X)\|^2 \gg C \eta$$
outside of an event of probability $O( \exp( - c (n\eta)^c ) )$.  If $C$ is sufficiently large, the claim follows.
\end{proof}

This gives rise to a self-consistent equation:

\begin{proposition}[Self-consistent equation]\label{sc}  Let $M_n$ be a Wigner matrix obeying Condition \condo, and let $W_n := \frac{1}{\sqrt{n}} M_n$.  Then for any $z = E + \sqrt{-1} \eta$ with $E = O(1)$ and $0 < \eta \ll n^{100}$, and all $1 \leq i \leq n$, one has
$$ R(z)_{ii} = -\frac{1}{s_{W_n}(z) + z + o(1) }$$
outside of an event of probability $O( n^{O(1)} \exp( - c(n\eta)^c ) )$.  In particular, by the union bound, we have
\begin{equation}\label{self-con}
 s_{W_n}(z) = -\frac{1}{s_{W_n}(z) + z + o(1)}
\end{equation}
outside of an event of probability $O( n^{O(1)} \exp( - c(n\eta)^c ) )$.
\end{proposition}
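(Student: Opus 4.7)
I would follow the now-standard recipe for deriving the self-consistent equation in local semicircle arguments: use the Schur complement identity to extract $R(z)_{ii}$, apply the Hanson--Wright bound (Proposition~\ref{hanson}) to the resulting quadratic form, and then identify the trace appearing on the right with $s_{W_n}(z)$ up to negligible error. By symmetry it suffices to treat $i=n$. Writing $R^{(n)}(z) := (W_n^{(n)} - z)^{-1}$ for the resolvent of the $(n-1)\times(n-1)$ minor, and $X$ for the last column of $W_n$ with the bottom entry removed, identity \eqref{shur} gives
\[
R(z)_{nn} = \frac{1}{\tfrac{1}{\sqrt n}\xi_{nn} - z - X^* R^{(n)}(z) X}.
\]
Condition~\condo\ handles the $\tfrac{1}{\sqrt n}\xi_{nn}$ term (it is $o(1)$ off an event of probability $O(\exp(-n^c))$), so the task reduces to showing $X^* R^{(n)}(z) X = s_{W_n}(z) + o(1)$ with the advertised probability.

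The vector $X$ has independent entries $\tfrac{1}{\sqrt n}\xi_{jn}$ ($j<n$) of variance $1/n$, and is independent of $W_n^{(n)}$. Conditioning on $W_n^{(n)}$ and invoking Proposition~\ref{hanson} with $\sigma^2=1/n$ and $A=R^{(n)}(z)$ gives
\[
X^* R^{(n)}(z) X = \tfrac{1}{n}\tr R^{(n)}(z) + O\!\left( \tfrac{T}{n} \|R^{(n)}(z)\|_F \right)
\]
outside an event of conditional probability $O(\exp(-cT^c))$. The Frobenius norm is controlled by the spectral identity $\|R^{(n)}(z)\|_F^2 = \tfrac{n-1}{\eta}\Im s_{W_n^{(n)}}(z)$; applying Proposition~\ref{crudo} to $W_n^{(n)}$ at the dyadic scales $2^k\eta$ for $k=0,1,\dots,O(\log n)$ and summing the contributions, one gets $\Im s_{W_n^{(n)}}(z) \ll \log n$ outside an event of probability $O(n^{O(1)}\exp(-c(n\eta)^c))$. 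Choosing $T$ to be a small fractional power of $n\eta$ then forces the Hanson--Wright error term to be $o(1)$ while keeping the failure probability within the required budget.

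It remains to replace $\tfrac{1}{n}\tr R^{(n)}(z)$ by $s_{W_n}(z)=\tfrac{1}{n}\tr R(z)$. This step is purely deterministic: the Cauchy interlacing law gives $|N_{W_n}(t) - N_{W_n^{(n)}}(t)|\le 1$ for every real $t$, and integration by parts in
\[
\tr R(z) - \tr R^{(n)}(z) = \int_\R \frac{d(N_{W_n}(t)-N_{W_n^{(n)}}(t))}{t-z}
\]
bounds the difference by $\int_\R |t-z|^{-2}\,dt = \pi/\eta$, whence $\tfrac{1}{n}\tr R^{(n)}(z) = s_{W_n}(z) + O(1/(n\eta)) = s_{W_n}(z) + o(1)$ in the regime where the conclusion is non-trivial. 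Combining the three estimates gives $R(z)_{nn}=-1/(s_{W_n}(z)+z+o(1))$, and the averaged statement for $s_{W_n}(z)$ follows by a union bound over $i$.

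The main obstacle, as I see it, is bookkeeping: one must verify that the logarithmic losses incurred when bounding $\Im s_{W_n^{(n)}}(z)$ via the dyadic crude bound can indeed be absorbed by choosing $T$ a small power of $n\eta$, without eroding the failure probability below the stated $n^{O(1)}\exp(-c(n\eta)^c)$. Every other step is a conditional application of an already-stated tool.
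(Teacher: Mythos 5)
Your proof is correct and follows essentially the same route as the paper: Schur complement \eqref{shur} to isolate $R(z)_{nn}$, Hanson--Wright (Proposition~\ref{hanson}) applied conditionally to the quadratic form $X^* R^{(n)}(z) X$, the crude counting bound (Proposition~\ref{crudo}) at dyadic scales to control the Frobenius norm of $R^{(n)}(z)$, a Cauchy interlacing step to replace $\tfrac{1}{n}\tr R^{(n)}(z)$ by $s_{W_n}(z)+o(1)$, and a choice of $T$ equal to a small power of $n\eta$. Two inessential differences: the paper applies Proposition~\ref{crudo} to $W_n$ and then transfers the counting bound to $W_n^{(n)}$ by interlacing, whereas you apply it directly to the minor; and your estimate $\Im s_{W_n^{(n)}}(z) \ll \log n$ is pessimistic (the dyadic contributions decay geometrically, giving $\Im s_{W_n^{(n)}}(z) \ll 1$ and hence $\tr R^{(n)}(z)^* R^{(n)}(z) \ll n/\eta$, which is what the paper records), but this extra $\log n$ is harmless since you absorb it by the choice of $T$ and the standing assumption $n\eta \ge \log^{100} n$.
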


\begin{proof}  We can assume that $n\eta \geq \log^{100} n$ (say), as the claim is trivial otherwise.  By symmetry, it will suffice to establish
$$ R(z)_{nn} = -\frac{1}{s_{W_n}(z) + z + o(1) }$$
outside of an event of probability $O( n^{O(1)} \exp( - c(n\eta)^c ) )$.  By \eqref{shur}, this statement is equivalent to
$$
X^* R^{(n)}(z) X - \frac{1}{\sqrt{n}} \xi_{nn} = s_{W_n}(z) + o(1).$$
By Condition \condo, one has $\frac{1}{\sqrt{n}} \xi_{nn} = o(1)$ outside of an event of probability $O( \exp(-cn^c) )$, which is certainly acceptable; so our task is now to show that
\begin{equation}\label{xrx}
X^* R^{(n)}(z) X = s_{W_n}(z) + o(1)
\end{equation}
outside of an event of probability $O( n^{O(1)} \exp( - c(n\eta)^c ) )$.  

From the Cauchy interlacing law (cf. \cite[\S 5.2]{TVlocal1}) we know that
$$ \frac{1}{n} \tr R^{(n)}(z) = s_{W_n}(z) + o(1).$$
Also,
\begin{equation}\label{trr}
 \tr R^{(n)}(z)^* R^{(n)}(z) = \sum_{i=1}^{n-1} \frac{1}{|\lambda_i(W_n^{(n)})-E|^2 + \eta^2}.
 \end{equation}
By Proposition \ref{crudo} and the union bound, we may assume outside of an event of probability $O( n^{O(1)} \exp( - c(n\eta)^c ) )$, one has
$$ N_I(W_n) \ll n |I|$$
for all intervals $I$ of width at least $\eta$ centered at $E$.  By interlacing, we may also conclude
$$ N_I(W_n^{(n)}) \ll n |I|$$
for such intervals.  Inserting this bound into \eqref{trr}, we conclude that
\begin{equation}\label{tral}
\tr R^{(n)}(z)^* R^{(n)}(z) \ll \frac{n}{\eta}.
\end{equation}
If we then apply Proposition \ref{hanson} with $T := (n\eta)^{1/4}$ (say), using the hypothesis that $n\eta \geq \log^{100} n$ (so that $1/(n\eta)^c = o(1)$ for any $c>0$) we conclude \eqref{xrx} outside of an event of order $O( n^{O(1)} \exp( - c(n\eta)^c ) )$, as required.
\end{proof}

We can combine this proposition with a standard stability analysis of the self-consistent equation \eqref{self-con} to conclude a crude version of the local semicircle law:

\begin{corollary}[Local semicircle law]\label{lsl}  Let $M_n$ be a Wigner matrix obeying Condition \condo, and let $W_n := \frac{1}{\sqrt{n}} M_n$.  Then for any $z = E + \sqrt{-1} \eta$ with $E = O(1)$ and $0 < \eta \ll n^{100}$, and all $1 \leq i \leq n$, one has
\begin{equation}\label{swoon}
 s_{W_n}(z) = s_\sc(z) + o(1)
\end{equation}
and
\begin{equation}\label{roon}
 R(z)_{ii} = s_\sc(z) + o(1)
 \end{equation}
outside of an event with probability $O( n^{O(1)} \exp( - c(n\eta)^c ) )$.
\end{corollary}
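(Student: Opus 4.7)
My plan is to derive \eqref{swoon} and \eqref{roon} by combining the approximate self-consistent equation in Proposition \ref{sc} with a stability/continuity analysis of the exact self-consistent equation \eqref{sce} obeyed by $s_\sc$. The key algebraic observation is that the polynomial $w^2 + zw + 1$ has exactly two roots, namely $s_\sc(z)$ and $\tilde s(z) := -1/s_\sc(z)$, and so if $s_{W_n}(z) = -1/(s_{W_n}(z) + z + \epsilon(z))$ for some error $\epsilon(z)$, then multiplying through gives
\begin{equation*}
(s_{W_n}(z) - s_\sc(z))(s_{W_n}(z) - \tilde s(z)) = -\epsilon(z)\, s_{W_n}(z).
\end{equation*}
Hence, whenever we have the a priori control $|s_{W_n}(z)| = O(\log n)$ (which follows by a dyadic decomposition from Proposition \ref{crudo}) and a separation bound $|s_{W_n}(z) - \tilde s(z)| \gg 1$, we can divide to conclude $|s_{W_n}(z) - s_\sc(z)| = o(1)$.

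The main difficulty is securing the separation from the spurious branch $\tilde s(z)$, which I would handle by a continuity (monodromy) argument in $\eta$. At a very large initial value $\eta_0$ (say $\eta_0 = n^{100}$), the trivial estimate $|s_{W_n}(z)| \le 1/\eta_0$ together with $s_\sc(z) = -1/z + O(1/|z|^3)$ for $|z|$ large gives immediately that $s_{W_n}(z)$ lies within $o(1)$ of $s_\sc(z)$ and very far (by $\gtrsim |z|$) from $\tilde s(z)$. I would then decrease $\eta$ along a discrete net $\eta_0 > \eta_1 > \cdots > \eta_N = \eta$ whose step sizes are, say, $n^{-1000}$, applying Proposition \ref{sc} and the union bound at each grid point (which costs only a factor of $n^{O(1)}$, absorbed into the stated failure probability). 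At adjacent grid points, the Lipschitz estimate $|\partial_\eta s_{W_n}(E+\sqrt{-1}\eta)| \ll 1/\eta^2 \le n^{O(1)}$ (and likewise for $s_\sc$) guarantees that $s_{W_n}$ cannot jump from a neighborhood of $s_\sc$ to a neighborhood of $\tilde s$ in a single step, provided the two branches remain separated by a positive constant (or at least by a quantity much larger than the Lipschitz step times the step size). One then runs an inductive bootstrap: at each $\eta_j$, the previous step gives $|s_{W_n}-s_\sc| = o(1)$, hence $|s_{W_n}-\tilde s| \gg |s_\sc-\tilde s| = |\sqrt{z^2-4}|$, whereupon the algebraic identity above upgrades this to $|s_{W_n}-s_\sc| = o(1)$ at the new scale.

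The most delicate point in this continuity argument is at $E$ near $\pm 2$, where $|s_\sc - \tilde s| = |\sqrt{z^2-4}|$ is small; however, since we only demand an $o(1)$ (not a quantitative power-saving) conclusion and $\eta \gg \log^{100} n/n$ can be assumed throughout (the range $n\eta \ll \log^{100} n$ being handled trivially by the estimate in the probability bound), the gap $|\sqrt{z^2-4}| \gtrsim \eta^{1/2}$ is still much larger than the self-consistent error $\epsilon(z) = o(1)$ after a routine choice of parameters, and the bootstrap closes.

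Finally, once \eqref{swoon} is established, the identity $R(z)_{ii} = -1/(s_{W_n}(z) + z + o(1))$ from Proposition \ref{sc} combined with the exact identity $s_\sc(z) = -1/(s_\sc(z) + z)$ and the fact that $|s_\sc(z) + z| = |1/s_\sc(z)|$ is bounded away from zero (uniformly for $E = O(1)$, $\eta > 0$) gives \eqref{roon} immediately, completing the proof.
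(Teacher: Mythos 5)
Your overall strategy---starting from the approximate self-consistent equation of Proposition \ref{sc}, factoring $w^2+zw+1$ to obtain a dichotomy between the semicircle branch and its algebraic conjugate, and running a continuity argument in $\eta$ from a large initial value to rule out the spurious branch---is precisely the route the paper takes, so the architecture is sound. However, there are two genuine defects in the details.

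First, a small sign slip: the second root of $w^2+zw+1=0$ is $\tilde s(z)=1/s_\sc(z)=-z-s_\sc(z)$, not $-1/s_\sc(z)$; this does not propagate, but should be noted.

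Second, and more seriously, your resolution of the edge case $E$ near $\pm 2$ does not close. You argue that the separation $|s_\sc-\tilde s|=|\sqrt{z^2-4}|\gtrsim \eta^{1/2}$ dominates the self-consistent error $\epsilon(z)=o(1)$. But for the smallest admissible $\eta$, namely $\eta\asymp \log^{100}n/n$, one has $\eta^{1/2}\asymp n^{-1/2+o(1)}$, whereas the $o(1)$ in Proposition \ref{sc} is merely asserted to tend to zero (in fact, tracking the Hanson--Wright parameters there gives an error of order $(n\eta)^{-1/4}\geq n^{-1/4}$, still far larger than $\eta^{1/2}$ when $\eta\ll n^{-1/3}$). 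So dividing through by $|s_{W_n}-\tilde s|$ does not yield $o(1)$ near the edge. The paper avoids this entirely by exploiting the opposite phenomenon: when $\zeta=\pm 2+o(1)$ the two roots $s_\sc$ and $\tilde s$ are within $o(1)$ of each other, so the two alternatives in the dichotomy coincide up to $o(1)$ error and \emph{either} branch yields $s_{W_n}=s_\sc+o(1)$; no separation is needed. Your bootstrap should be replaced by this observation near the edge.

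A lesser loose end: you propose to obtain the a priori control $|s_{W_n}(z)|=O(\log n)$ from Proposition \ref{crudo} via dyadic summation. Dividing in the factored identity then only yields $|s_{W_n}-s_\sc|=o(1)\cdot O(\log n)$, which is formally $o(\log n)$, not $o(1)$; one either has to track the power of $\log n$ saved in Proposition \ref{sc}'s error term, or (as the paper does) read off the stronger bound $|s_{W_n}(z)|=O(1)$ directly from the self-consistent equation itself, since $|s_{W_n}|\cdot|s_{W_n}+z+o(1)|=1$ with $z=O(1)$ forces $|s_{W_n}|=O(1)$. The latter is cleaner and should be used. The derivation of \eqref{roon} from \eqref{swoon} at the end is correct.
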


We note that this corollary is essentially \cite[Theorem 3.1]{EYY2}; in the statement of the result in \cite{EYY2} the additional constraint $\eta \geq \log^{C \log \log n}/n$ for some constant $C$ is imposed, but this constraint is not actually used in the proof, at least if one is not concerned with obtaining the best possible bounds for the $o(1)$ error terms.  For the convenience of the reader, we sketch the proof of this corollary below.

\begin{proof}  As before we may assume that $\eta \geq \log^{100} n/n$; we may also assume that $n$ is large.  By Proposition \ref{sc}, we may assume that \eqref{self-con} holds.  

Let us first dispose of the case when $\eta$ is large, say $\eta \geq 100$.  In this case, the imaginary part of $s_{W_n}(z)+z+o(1)$ is at least $100-o(1)$, and hence by \eqref{self-con} one has $|s_{W_n}(z)| \leq 1/100 + o(1)$; inserting this back into \eqref{self-con} (and using \eqref{explicit}) one obtains $|s_{W_n}(z) - s_\sc(z)| \leq 1/10$ (say).  One can then deduce \eqref{swoon} from \eqref{self-con} (and \eqref{sce}) by a routine application of the contraction mapping theorem.

Henceforth we assume that $\eta < 100$, so that $z=O(1)$.  Then equation \eqref{self-con} already implies that 
$s_{W_n}(z)=O(1)$, since \eqref{self-con} cannot hold if $|s_{W_n}(z)|$ is too large.  We may thus multiply out the denominator and conclude that
$$ s_{W_n}(z)^2 + z s_{W_n}(z) + 1 = o(1).$$
Since the two solutions to the quadratic equation $s^2 + zs + 1 = 0$ are $s = s_\sc(z)$ and $s = -z-s_\sc(z)$, we conclude that
$$ s_{W_n}(z) = s_\sc(z)+o(1) \hbox{ or } s_{W_n}(z) = -z-s_\sc(z)+o(1)$$
outside of an event with probability $O( n^{O(1)} \exp( - c(n\eta)^c ) )$.

We apply this fact with $z$ replaced by an arbitrary complex numbers $\zeta$ with $\Re(\zeta) = O(1)$ and $\eta \leq \Im(\zeta) \ll 1$, and whose real and imaginary parts are multiples of $n^{-100}$ (say).  By the union bound, the probability of the failure event is still $O( n^{O(1)} \exp( - c(n\eta)^c ) )$.  We may then remove the latter hypotheses using the fact that $s_{W_n}$ and $s_\sc$ have Lipschitz constant $O(n)$ in this region, and conclude that outside of an event of probability $O( n^{O(1)} \exp( - c(n\eta)^c ) )$, one has
\begin{equation}\label{swnc}
 s_{W_n}(\zeta) = s_\sc(\zeta)+o(1) \hbox{ or } s_{W_n}(\zeta) = -\zeta-s_\sc(\zeta)+o(1)
\end{equation}
for \emph{all} $\zeta$ with $\Re(\zeta) = O(1)$ and $\eta \leq \Im(\zeta) \ll 1$. On the other hand, if one has $\Im(\zeta) \geq c$ for some absolute constant $c>0$, then the second possibility in \eqref{swnc} cannot occur for $n$ large enough, because $s_{W_n}(\zeta)$ necessarily has positive imaginary part.  A continuity argument then shows that the first option in \eqref{swnc} holds for all $\zeta$ in the indicated region\footnote{When $\zeta$ approaches the edges $\pm 2$ of the spectrum, thus $\zeta = \pm 2 + o(1)$, the two options in \eqref{swnc} begin to overlap, but in that regime one can deduce the first option from the second (with a slightly worse $o(1)$ error) and so the claim made in the text is still valid.}.  This gives \eqref{swoon}.  Among other things, this shows that $|s_{W_n}(z) + z| \gg 1$ (thanks to \eqref{sce}), and then from \eqref{sce} and the second part of Proposition \ref{sc} we obtain \eqref{roon}.
\end{proof}

For our applications, we will also need bounds on the coefficient norm
$$ \| R(z) \|_{(\infty,1)} := \sup_{1 \leq i,j \leq n} |R(z)_{ij}|$$
of the resolvent.

\begin{corollary}[Resolvent bound]\label{rb}  Let $M_n$ be a Wigner matrix obeying Condition \condo, and let $W_n := \frac{1}{\sqrt{n}} M_n$.  Then for any $z = E + \sqrt{-1} \eta$ with $E = O(1)$ and $0 < \eta \ll n^{100}$, one has
\begin{equation}\label{ro2}
 \|R(z)\|_{(\infty,1)} = O(1)
 \end{equation}
outside of an event with probability $O( n^{O(1)} \exp( - c(n\eta)^c ) )$.
\end{corollary}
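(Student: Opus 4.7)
The diagonal entries are handled immediately by Corollary \ref{lsl}: $R_{ii}(z) = s_\sc(z) + o(1) = O(1)$ outside an event of probability $O(n^{O(1)} \exp(-c(n\eta)^c))$, using that $|s_\sc(z)| = O(1)$ uniformly for $\Re z = O(1)$ and $\Im z > 0$.

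For an off-diagonal entry $R_{ij}(z)$ with $i \ne j$, I would employ a two-step Schur complement. Let $W_n^{(i)}$ and $W_n^{(ij)}$ denote the principal submatrices of $W_n$ obtained by deleting row/column $i$ and rows/columns $i,j$, respectively, with resolvents $R^{(i)}, R^{(ij)}$; and let $c_i^{(ij)}, c_j^{(ij)} \in \C^{n-2}$ denote columns $i$ and $j$ of $W_n$ with the entries at positions $i, j$ removed. Block inversion of the $2 \times 2$ Schur complement at indices $\{i,j\}$ yields the identity
\begin{equation*}
R_{ij}(z) = -\, R_{ii}(z)\, R^{(i)}_{jj}(z)\, \bigl( W_{n,ij} - (c_i^{(ij)})^* R^{(ij)}(z)\, c_j^{(ij)} \bigr).
\end{equation*}
Applying Corollary \ref{lsl} to $W_n$ gives $|R_{ii}(z)| = O(1)$, and the same corollary applied to $W_n^{(i)}$ (which is a Wigner matrix up to the harmless $\sqrt{n/(n-1)} = 1+O(1/n)$ rescaling of its normalization) gives $|R^{(i)}_{jj}(z)| = O(1)$. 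The scalar $|W_{n,ij}|$ is $o(1)$ with overwhelming probability by Condition \condo{}.

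The key step is controlling the bilinear form $\beta := (c_i^{(ij)})^* R^{(ij)}(z)\, c_j^{(ij)}$. Crucially, $c_i^{(ij)}$ and $c_j^{(ij)}$ are independent of each other — their entries come from disjoint subsets of the independent upper-triangular entries of $M_n$, once the shared entry at position $(i,j)$ has been excised — and both are independent of $W_n^{(ij)}$. Conditioning on $W_n^{(ij)}$, I would apply Proposition \ref{hanson} to the stacked vector $\tilde X := (c_i^{(ij)}, c_j^{(ij)}) \in \C^{2(n-2)}$ with the block-off-diagonal matrix $\tilde A$ built from $R^{(ij)}$, for which $\tilde X^* \tilde A\, \tilde X$ recovers $\Re \beta$ (and an analogous choice recovers $\Im \beta$). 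Since $\tr \tilde A = 0$ and $\tr(\tilde A^* \tilde A) \ll \tr\bigl((R^{(ij)})^* R^{(ij)}\bigr) = \sum_k \bigl((\lambda_k(W_n^{(ij)})-E)^2 + \eta^2\bigr)^{-1} \ll n/\eta$ — the last bound following from Proposition \ref{crudo} applied to $W_n^{(ij)}$ via Cauchy interlacing from $W_n$ — Proposition \ref{hanson} yields $|\beta| \ll T/\sqrt{n\eta}$ outside an event of probability $O(\exp(-cT^c))$. Taking $T := (n\eta)^{c'}$ for a sufficiently small $c' > 0$ makes $|\beta| = o(1)$ in the nontrivial regime $n\eta \ge \log^{100} n$ (the claim being vacuous otherwise), hence $|R_{ij}(z)| = o(1)$. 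A union bound over the $n^2$ pairs $(i,j)$ completes the proof, the $n^2$ factor being absorbed into the $n^{O(1)}$ prefactor. The main obstacle is organizing the independence structure cleanly enough to invoke bilinear-form concentration via a quadratic-form bound, and transferring the estimates of Corollary \ref{lsl} and Proposition \ref{crudo} to the slightly-rescaled submatrices $W_n^{(i)}$ and $W_n^{(ij)}$; the latter is routine but warrants brief justification.
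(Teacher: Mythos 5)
Your proposal is correct and follows essentially the same route as the paper: the same Schur complement identity for $R_{ij}$, the same use of Corollary~\ref{lsl} to bound $R_{ii}$ and $R^{(i)}_{jj}$, the same reduction of the bilinear form to Proposition~\ref{hanson} via a stacked vector and a block--off-diagonal matrix, and the same bound $\tr((R^{(ij)})^* R^{(ij)}) \ll n/\eta$ via Proposition~\ref{crudo} and interlacing. The only (cosmetic) difference is your choice $T=(n\eta)^{c'}$ giving $\beta=o(1)$, versus the paper's $T=O(\sqrt{n\eta})$ giving $\beta=O(1)$; both suffice.
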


\begin{proof} Again, we may assume $\eta > \log^{100} n/n$.  By the union bound, it suffices to show for each $1 \leq i, j \leq n$ that
$$
 |R(z)_{ij}| = O(1)$$
outside of an event with probability $O( n^{O(1)} \exp( - c(n\eta)^c ) )$.  In the diagonal case $i=j$, this follows directly from \eqref{roon}, so suppose that $i \neq j$.  In this case, we may use the Schur complement identity
$$ R(z)_{ij} = - R(z)_{ii} R^{(i)}(z)_{jj} K_{ij}^{(ij)}$$
where $R^{(i)}(z)$ is the resolvent associated to the $n-1 \times n-1$ matrix $W_n^{(i)}$ formed by removing the $i^{\operatorname{th}}$ row and column from $W_n$, and $K_{ij}^{(ij)}$ is the quantity
$$ K_{ij}^{(ij)} = \frac{1}{\sqrt{n}} \zeta_{ij} - X_i^* (W_n^{(ij)} - z)^{-1} X_j,$$
$\zeta_{ij}$ is the $ij$ coefficient of $W_n$, $W_n^{(ij)}$ is the $n-2 \times n-2$ matrix formed by removing the $i^{\operatorname{th}}$ and $j^{\operatorname{th}}$ rows and columns from $W_n$, and $X_i, X_j \in \C^{n-2}$ are the $i^{\operatorname{th}}$ and $j^{\operatorname{th}}$ columns of $W_n$, after removing the $i^{\operatorname{th}}$ and $j^{\operatorname{th}}$ rows.  See \cite[Lemma 4.2]{EYY} for a proof of this identity.  From \eqref{roon} applied to both the original Wigner matrix $W_n$ and the minor $W_n^{(i)}$ (which is essentially also a Wigner matrix, up to an easily manageable multiplicative factor of $\frac{\sqrt{n-1}}{\sqrt{n}}$) we see that $R(z)_{ii}=O(1)$ and $R^{(i)}(z)_{jj} = O(1)$ outside of an event of probability $O( n^{O(1)} \exp( - c(n\eta)^c ) )$, so it suffices to obtain the bound $K_{ij}^{(ij)} = O(1)$ outside of a similar event.  But from Condition \condo, one has $\frac{1}{\sqrt{n}} \zeta_{ij} = O(1)$ outside of an event of probability $O(\exp(-n^c))$, which is certainly acceptable, so it suffices to show that
$$ X_i^* (W_n^{(ij)} - z)^{-1} X_j = O(1)$$
outside of an event of probability $O( n^{O(1)} \exp( - c(n\eta)^c ) )$.    But by Proposition \ref{hanson} (viewing the $n-2 \times n-2$ matrix $(W_n^{(ij)} -z)^{-1}$ as the upper-right block of a nilpotent $2(n-2) \times 2(n-2)$ matrix, and concatenating $X_i$ and $X_j$ together), one has
$$ X_i^* (W_n^{(ij)} - z)^{-1} X_j = O( \frac{1}{n} T (\tr(((W_n^{(ij)} - z)^{-1})^* (W_n^{(ij)} - z)^{-1}))^{1/2} )$$
outside of an event of probability $O( \exp(-cT^c) )$, for any $T>0$.  But by repeating the derivation of \eqref{tral}, one has
$$ \tr(((W_n^{(ij)} - z)^{-1})^* (W_n^{(ij)} - z)^{-1}) = O( \frac{n}{\eta}).$$
If one then sets $T = O( \sqrt{n\eta} )$, one obtains the claim.
\end{proof}

We remark that the above argument in fact shows that we may improve the bound $R(z)_{ij}=O(1)$ to $R(z)_{ij} = O( \frac{1}{(n\eta)^{1/2-\delta}})$ for any fixed $\delta>0$; compare with \cite[Theorem 3.1]{EYY2}.  However, this improvement is not used in this paper.

\end{document}